\newcommand{\Eq}[1]{(\ref{eq:#1})}
\newcommand{\bTh}[1]{Th.~\ref{thm:#1}}
\newcommand{\Lem}[1]{Lem.~\ref{lem:#1}}
\newcommand{\Def}[1]{Def.~\ref{def:#1}}
\newcommand{\Sec}[1]{\S \ref{sec:#1}}
\newcommand{\Fig}[1]{Fig.~\ref{fig:#1}}
\newcommand{\App}[1]{Appendix~\ref{app:#1}}
\newcommand{\InsertFig}[4]
{\begin{figure}[h!t]
       \centerline{
         \includegraphics[width=#4]{./#1}
       }
       \caption{{\footnotesize  #2}
       \label{fig:#3}}
\end{figure}}
\newcommand{\InsertFigTwo}[5] {
\begin{figure}[h!t]
       \centerline{
         \includegraphics[width=#5]{./#1}
         \hskip 0.5in
         \includegraphics[width=#5]{./#2}
       }
       \caption{{\footnotesize  #3}
       \label{fig:#4}}
\end{figure}}
\newcommand{\bR}{{\mathbb{ R}}}
\newcommand{\bT}{{\mathbb{ T}}}
\newcommand{\bZ}{{\mathbb{ Z}}}
\newcommand{\cA}{{\cal A}}
\newcommand{\cB}{{\cal B}}
\newcommand{\cD}{{\cal D}}
\newcommand{\cF}{{\cal F}}
\newcommand{\cL}{{\cal L}}
\newcommand{\cR}{{\cal R}}
\newcommand{\cS}{{\cal S}}
\newcommand{\cU}{{\cal U}}
\newcommand{\cI}{{\cal I}}
\newcommand{\cP}{{\cal P}}
\newcommand{\cW}{{\cal W}}
\newcommand{\bc}{{\bf c}}
\newcommand{\bx}{{\bf x}}
\newcommand{\bw}{{\bf w}}
\newcommand{\vphi}{\varphi}
\newcommand{\Wu}{W^{\mathrm{u}}}
\newcommand{\Ws}{W^{\mathrm{s}}}
\newcommand{\Vol}{\mathrm{Vol}}
\newcommand{\sgn}{\mathop{\rm sgn}\nolimits}
\newcommand{\beq}[1]{\begin{equation}\label{eq:#1}}
\newcommand{\eeq}{\end{equation}}
\newenvironment{se}[1]{\equation\label{eq:#1}\aligned}{\endaligned\endequation}
\newcommand{\bsplit}[1]{\begin{se}{#1}}
\newcommand{\esplit}{\end{se}}
\newenvironment{example}[1][]
  {
	\setlength \leftmargini {1.0em}		
	\setlength \topsep {0.5em}			
	\begin{quote}
	{\it Example#1} }
	{\end{quote}
  }
\newcommand{\bexam}[1][:]{\begin{example}[#1]}
\newcommand{\eexam}{\end{example}}
\title{Transport in Transitory, Three-Dimensional, Liouville Flows \thanks
      {
        BAM and JDM were supported in part by NSF grant DMS-0707659.
        Useful conversations with Michel F.~M.~Speetjens are gratefully acknowledged.
      }}
\author{B.~A.~Mosovsky $^\dag$ and J.~D.~Meiss 
        \thanks{Department of Applied Mathematics, 
        University of Colorado, Boulder, CO 80309-0526 
        ({\tt brock.mosovsky@colorado.edu, james.meiss@colorado.edu})}
}
\date{\today}
\begin{document}
\maketitle

\begin{abstract}
We derive an action-flux formula to compute the volumes of lobes quantifying transport between past- and future-invariant Lagrangian coherent structures of $n$-dimensional, transitory, globally Liouville flows. A transitory system is one that is nonautonomous only on a compact time interval.  This method requires relatively little Lagrangian information about the codimension-one surfaces bounding the lobes, relying only on the generalized actions of loops on the lobe boundaries.  These are easily computed since the vector fields are autonomous before and after the time-dependent transition.  Two examples in three-dimensions are studied: a transitory ABC flow and a model of a microdroplet moving through a microfluidic channel mixer.  In both cases the action-flux computations of transport are compared to those obtained using Monte Carlo methods.
\end{abstract}

\begin{keywords}
  Liouville, Hamiltonian, transport, Lagrangian coherent structures, Lagrangian action, ABC flow, microfluidics
\end{keywords}

\begin{AMS}
 37J45, 37D05, 37C60, 37C10
\end{AMS}

\section{Transitory Systems} \label{sec:intro}
Finite-time transitions between steady states are common in a wide range of physical systems.  They play a key role in industrial mixing processes, mechanical systems in which parameters are modulated in a time-dependent manner, chemical reactions that progress to equilibrium, and shifts in local ecology due to a sudden environmental change.  Since the transition mechanism may be complex and the starting and ending states often differ, a prediction of the final state of the system requires a detailed understanding of the transitional dynamics.

In many cases, an analysis of transport and mixing in these systems can provide such an understanding.  However, since any finite-time transition is aperiodically time-dependent, traditional techniques for computing dynamical transport \cite{MMP84, RomKedar88, Meiss92, MacKay94, Lomeli08c} are often insufficient.  In nonautonomous systems transport is often thought of as occurring between ``Lagrangian coherent structures"; these are variously defined, for example, using ridges of finite-time Lyapunov exponent fields \cite{Haller00, Shadden05, Lekien07}, distinguished hyperbolic trajectories \cite{Ide02, Madrid09}, or eigenfunctions of the Perron-Frobenius operator \cite{Froyland09, Froyland10}.  However, few studies have quantitatively computed transport between coherent structures in aperiodic flows \cite{Haller98b, Cardwell08, Mendoza10, Mosovsky11}, and these have been restricted to two-dimensions.  Several studies of mixing in aperiodic flows have also been conducted \cite{Liu94, Poje99, Kang08}; however, these have focused primarily on global mixing measures rather than transport between coherent structures, and again results have been restricted to 2D.  To the best of our knowledge, no studies to date have given a quantitative description of finite-time transport between isolated coherent structures in a 3D, aperiodic flow.

In this paper, we present a formalism to compute transported volumes between Lagrangian coherent structures in a class of 3D aperiodic flows that we call ``transitory''---the formal definition will be recalled from \cite{Mosovsky11} in the remainder of this section. Our theory applies to incompressible vector fields that are, in addition, globally ``Liouville", see \Sec{Liouville}. The Lagrangian coherent structures we consider are past- and future-invariant regions of phase space, and transport between them corresponds to the volumes of certain lobes comprising the intersections of these regions.  The salient point is that the computation of lobe volumes can be done by knowing only key ``heteroclinic" trajectories that lie on the lobe boundaries.  Compared to a na\"{\i}ve, volume-integral approach, our method reduces by two the dimension of the Lagrangian information needed at any instant in time to compute a lobe volume. The result is an action-flux formula for $n$-dimensional lobe volumes, see \Sec{lobeVolumes}.  As examples, we will compute transport in a nonautonomous version of Arnold's ABC flow \cite{Arnold65} in \Sec{abc}, and in a model flow of a droplet in a microfluidic mixer in \Sec{microdroplet}.

On a phase space $M$, a \emph{transitory} ODE  \cite{Mosovsky11} of \emph{transition time} $\tau$ has the form
\beq{transitoryODE}
    \dot \bx = V(\bx,t)  , \quad  
        V(\bx,t) = \left\{\begin{matrix} P(\bx), & t < 0 \\
			                 F(\bx), & t > \tau \end{matrix}\right. ,
\eeq
where $P: M \to TM$ is the past vector field, $F: M \to TM$ is the future vector field, and $V:M \times \bR \to TM$ is otherwise arbitrary on the transition interval $[0, \tau]$.  One way to model this type of behavior is by way of a transition function
\beq{transitionFunction}
  s(t) = \left\{ \begin{matrix} 0, &t < 0 \\ 
				1, &t > \tau
		  \end{matrix} \right. ,
\eeq
with the convex combination
\beq{transitoryModel}
  V(\bx,t) = (1-s(t)) P(\bx) + s(t) F(\bx).
\eeq
For example, choosing
\beq{cubicTransition}
  s(t) =  \frac{t^2}{\tau^2}\left(3-2\frac{t}{\tau} \right) \quad \textrm{for} \quad t \in [0,\tau],
\eeq
along with \Eq{transitionFunction}, implies that $V$ is $C^1$ in time; we will use this form of $s(t)$ in \Sec{abc}.

Since the nonautonomous portion of the dynamics of \Eq{transitoryODE} is assumed to occur on a compact interval, it can be effected by a map.  Suppose that $V$ in \Eq{transitoryModel} has a complete flow $\vphi_{t_1,t_0}: M \to M$ that maps a point from its position at $t=t_0$ to its position at $t=t_1$ for any $t_0, t_1 \in \bR$.  Given a set $\cA_{t_0} \subseteq M$ at time $t_0$, denote its evolution at time $t$ under the flow by 
\[ 
  \cA_t = \vphi_{t,t_0} (\cA_{t_0}),
\]
and its orbit in the extended phase space by
\[ 
  \cA = \{ (\cA_t,t) : t \in \bR \} \subseteq M \times \bR.
\]
The orbit, $\cA$ of any $\cA_{t_0} \subseteq M$ is clearly invariant under $\vphi$, and we refer to $\cA_t$ as the \emph{time-$t$ slice} of $\cA$.  The {\em transition map} $T: M \to M$ for \Eq{transitoryODE} is
\beq{transitionMap}
	T(\bx) = \vphi_{\tau,0}(\bx).
\eeq
Consequently, a set $\cA_0$ at $t=0$ becomes $\cA_\tau = T(\cA_0)$ at time $\tau$, and thereafter evolves under $F$.  If the dynamics of $P$ and $F$ are known, then the only nontrivial work we must do is to characterize the map $T$.

For transitory systems, it is natural to introduce some terminology for orbits according to their behavior under the stationary vector fields $P$ or $F$.  We will say an orbit $\cA$ is \emph{past invariant} if $\cA_t = \cA_0$  for all $t<0$, and such a set is \emph{past hyperbolic} if $\cA_0$ is a hyperbolic invariant set of $P$.  Similarly, $\cA$ is \emph{future invariant} if $\cA_t = \cA_\tau$ for all $t>\tau$ and is \emph{future hyperbolic} if $\cA_\tau$ is a hyperbolic invariant set of $F$.  By extension, we call a slice $\cA_t$ past/future invariant/hyperbolic if its orbit in the extended phase space satisfies the above definitions.  Of course, sets that are invariant/hyperbolic under $P$ or $F$ need not be so under the transitory vector field $V$.  For example, the orbit of a hyperbolic equilibrium $p$ of $P$ is both past invariant and past hyperbolic.  However, because of the time-dependence of $V$ on $[0,\tau]$, $T(p)$ is typically not an equilibrium of $F$, and even if it is, it need not be hyperbolic. Thus, the orbit of $p$ under the transitory flow $\vphi$ need not be future invariant nor future hyperbolic.  These concepts of ``half-time'' invariance and hyperbolicity will be used extensively in the remainder of the paper.

Recall that stable and unstable sets $W^{s,u}(\gamma) \subset M \times \bR$ of an orbit $\gamma \subset M \times \bR$, are the sets of points that approach $\gamma_t$ as $t \to +\infty$ or $-\infty$, respectively. When $\gamma$ is past hyperbolic, each time-$t$ slice of its unstable set for $t<0$ is the unstable manifold of the orbit of $\gamma_0$ under the stationary flow of $P$; more importantly, the flow of this manifold under $V$ is precisely the unstable manifold of the \emph{full} orbit $\gamma$. However, the stable manifold of the orbit of $\gamma_0$ under the flow of $P$ is not dynamically relevant for the transitory vector field---it almost certainly is not a stable set for $\gamma$. Thus, the unstable manifold of a past-hyperbolic set is dynamically relevant for the transitory vector field.  Similarly it is the stable manifold of a future-hyperbolic set that is dynamically relevant for $V$. In the application described in \Sec{abc}, the intersections between an unstable manifold of a past-hyperbolic set and a stable manifold of a future-hyperbolic set will be used to define lobes pivotal to the study of transport.

\section{Liouville vector fields}\label{sec:Liouville}

Recall that Hamiltonian systems are defined on even dimensional manifolds $M$ that are endowed with a closed, nondegenerate two-form $\omega \in \Lambda^2(M)$, the ``symplectic form". A \emph{locally Hamiltonian} \cite[Prop. 3.3.6]{Abraham78} vector field $V: M \times \bR \to TM$ is one that preserves $\omega$, that is, one for which
\beq{locallyH}
	\cL_V \omega = 0  ,
\eeq
where $\cL_V$ is the Lie derivative (see \Eq{LieDeriv} in \App{appendix}). 
Cartan's formula \Eq{LieIdentity} and the assumption that $d\omega = 0$ together give
\[
	\cL_V\omega = d(\imath_V\omega)  .
\]
In this case, \Eq{locallyH} implies that $d(\imath_V\omega) = 0$; in other words, whenever $V$ is locally Hamiltonian,  $\imath_V\omega$ is closed. 

If, in addition, this form is exact,
\beq{globallyH}
	\imath_V\omega = dH ,
\eeq
then $V$ is \emph{globally Hamiltonian}, with the Hamiltonian function $H: M \times \bR \to \bR$. By Darboux's theorem \cite[Thm 3.2.2]{Abraham78}, there is a neighborhood of each point in $M$ in which there are coordinates $(q,p)$ so that $\omega = dq \wedge dp$. 
In these coordinates, \Eq{globallyH} takes the form
\[
	\dot q = \partial_p H , \quad \dot p = - \partial_q H ,
\]
i.e., a canonical Hamiltonian system.

More generally, suppose an $n$-dimensional manifold $M$ is endowed with a nondegenerate form, $\Omega \in \Lambda^n(M)$, i.e., a ``volume-form". A vector field $V$ is incompressible with respect to $\Omega$, or \emph{locally Liouville}, if
\beq{locallyI}
	\cL_V \Omega \equiv (\nabla \cdot V) \Omega = 0 .
\eeq
Liouville's theorem then implies that the volume of any region is preserved by the flow of $V$ \cite[\S 9.2]{Meiss07}. As before, \Eq{locallyI} implies that $i_V\Omega$ is closed; if it is also exact, a global analog can be defined:
\begin{definition}[Globally Liouville {\cite[\S 2]{Marmo81}}] \label{def:Liouville}
  A vector field $V$ on a manifold $M$ with volume form $\Omega$ is \emph{globally Liouville} if 
  \beq{Liouville}
    \imath_V \Omega = d\beta,
  \eeq
  for some $\beta \in \Lambda^{n-2}(M)$.
\end{definition}
Of course, if $M$ has trivial cohomology then every closed form is exact, and there is no distinction between locally and globally Hamiltonian or Liouville vector fields. More generally, there may be some global obstruction to the existence of $H$ or $\beta$.  For example, suppose that $M = \bT^2$, and $\Omega = \omega = d\theta_1 \wedge d\theta_2$ is the volume/symplectic form.   
Then the vector field $V = \rho_i \partial_{\theta_i}$, for a constant rotation vector $\rho$, is incompressible because $i_V\Omega = \rho_1 d\theta_2 - \rho_2 d\theta_1$ is closed. However, since this form is not exact ($\theta_1$ and $\theta_2$ are not smooth functions on $M$), $V$ is not Hamiltonian, or equivalently, not Liouville.

As a second example, suppose $M = \bR^3$ and $\Omega$ is the standard volume, $\Omega = dx_1\wedge dx_2 \wedge dx_3$. Using the natural identification of a two-form $\zeta = \epsilon_{ijk} \zeta_i dx_j \wedge dx_k$ with the vector $\vec \zeta = \zeta_i \hat e_i$  and a one-form $\beta = \beta_i dx_i$ with a vector $\vec \beta = \beta_i \hat e_i$, \Eq{Liouville} reduces to the statement that $\vec V = \nabla \times \vec \beta$.  
For instance, if $V = \dot x_i \partial_{x_i}$ is a Beltrami vector field on $\bR^3$, i.e. $\vec V = \nabla \times \vec V$, then $\vec \beta = \vec V$, or  
\beq{BeltramiBeta}
\beta = \dot x_i dx_i.
\eeq
The ABC vector field (see \Sec{abc}) is Beltrami, and hence has this property.

The symplectic form $\omega$ is, by definition, closed. If it is also exact, then there is a one-form $\nu$ (often called the Liouville form) such that
$
	\omega = -d\nu
$.
Then if $V$ is globally Hamiltonian,
\[
	\cL_V \nu = \imath_V d\nu + d(\imath_V\nu) = d(\imath_V\nu - H) = dL ,
\]
where $L$ is the phase space Lagrangian. In canonical coordinates we could choose $\nu = p \cdot dq$, in which case $L = p \cdot \dot{q} - H$.

Similarly, if a volume form $\Omega$ is exact, we write
\beq{alphaDefine}
	\Omega = d\alpha  .
\eeq
Then, if $V$ is globally Liouville,
\[
	\cL_V \alpha = \imath_V d\alpha + d(\imath_V \alpha) = d(\imath_V\alpha + \beta).
\]
Here $\imath_V\alpha + \beta \in \Lambda^{n-2}(M)$ is the Liouvillian analog of the Lagrangian $L$, so we make the following definition:
\begin{definition}[Lagrangian Form] \label{def:LagrangianForm}
  Suppose that the volume form $\Omega = d\alpha$ is exact and the vector field $V$ is globally Liouville on a manifold $M$.  Then the \emph{Lagrangian form} $\lambda \in \Lambda^{n-2}(M)$ is defined by
  \beq{lambdaDefine}
    \cL_V \alpha = d\lambda \;, \mbox{ where } \lambda = \imath_V\alpha + \beta,
  \eeq
 and $\imath_V \Omega = d\beta$.
\end{definition}
For example, the standard volume $\Omega$ is exact on $M = \bR^3$ with
\beq{chosenAlpha}
	\alpha = x_3\, dx_1 \wedge dx_2.
\eeq
When $V = \dot x_i \partial_{x_i}$ is Beltrami,  \Eq{BeltramiBeta}, \Eq{lambdaDefine}, and \Eq{chosenAlpha} imply
\beq{BeltramiLambda}
	\lambda = \epsilon_{3jk} x_3 \dot x_j\, dx_k + \dot x_i\, dx_i.
\eeq

The discrete analogs of globally Liouville flows are exact volume-preserving maps; we define them here and note some key properties in anticipation of their use in \Sec{microdroplet}.
\begin{definition}[Exact Volume-Preserving Map \cite{Lomeli09}]
  Suppose that the volume form $\Omega = d\alpha$ is exact on a manifold $M$.  Then a diffeomorphism $R:M\to M$ is \emph{exact volume-preserving} if there exists a \emph{generating form} $\eta \in \Lambda^{n-2}(M)$ such that 
  \beq{exactVP}
    \alpha - R_*\alpha = d\eta.
  \eeq
\end{definition}
\noindent
Here we use the pushforward $R_*$, recall \Eq{pushForwardF}, instead of the pullback of \cite{Lomeli09} for later convenience. 

It is straightforward to show that if $R_1$ and $R_2$ are exact volume-preserving maps with generating forms $\eta_1$ and $\eta_2$, respectively, then the composition $R = R_1 \circ R_2$ is also exact volume-preserving \cite{Lomeli09} with generating form 
\beq{composition}
 \eta = \eta_1 + R_{1*} \eta_2.
\eeq
The generating form $\eta$ is the discrete analog of the Lagrangian form  \Eq{lambdaDefine}, and interacts with the latter as follows.

\begin{lemma} \label{lem:newLambda}
  Suppose that $\Omega = d\alpha$ is exact, $V$ is globally Liouville with Lagrangian form $\lambda_V$, and $R$ is exact volume-preserving with generating form $\eta$. Then the vector field $W = R_*V$ is globally Liouville with Lagrangian form
  \beq{newLambda}
    \lambda_W = R_*\lambda_V + \cL_W\eta.
  \eeq
\end{lemma}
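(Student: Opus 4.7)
The plan is to verify two separate assertions: first that $W = R_*V$ is globally Liouville, and second that the proposed form on the right of \Eq{newLambda} is a valid Lagrangian form for it. The main tools are the naturality identities $R_* \circ d = d \circ R_*$ and $R_*\cL_V = \cL_{R_*V}R_* = \cL_W R_*$, together with Cartan's formula $\cL_W = \imath_W d + d\imath_W$; all of these hold because $R$ is a diffeomorphism.

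For the Liouville property of $W$, I would first apply $d$ to \Eq{exactVP} and use $d^2=0$ to conclude $R_*\Omega = \Omega$. Then naturality of contractions gives $\imath_W\Omega = \imath_{R_*V}(R_*\Omega) = R_*(\imath_V\Omega) = R_*(d\beta) = d(R_*\beta)$, so $W$ is globally Liouville with $\beta_W = R_*\beta$ as one valid primitive. For the Lagrangian form, rather than rebuilding $\lambda_W$ from scratch through \Eq{lambdaDefine}, I would apply $d$ directly to the proposed expression $R_*\lambda_V + \cL_W\eta$. Using commutativity of $d$ with both $R_*$ and $\cL_W$, the identity $d\lambda_V = \cL_V\alpha$, the naturality $R_*(\cL_V\alpha) = \cL_W(R_*\alpha)$, and the exact volume-preserving condition $R_*\alpha = \alpha - d\eta$, everything collapses in one line to
\[
d\bigl(R_*\lambda_V + \cL_W\eta\bigr) = R_* \cL_V \alpha + \cL_W d\eta = \cL_W(\alpha - d\eta) + \cL_W d\eta = \cL_W\alpha,
\]
which is precisely the defining equation for $\lambda_W$. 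As a consistency check one may also unpack $\lambda_V = \imath_V\alpha + \beta$ to see that $R_*\lambda_V + \cL_W\eta = \imath_W\alpha + (R_*\beta + d\imath_W\eta)$, confirming the decomposition in \Eq{lambdaDefine} with the modified primitive $\beta_W = R_*\beta + d\imath_W\eta$ (which still satisfies $d\beta_W = \imath_W\Omega$).

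The main obstacle is essentially notational care rather than conceptual difficulty: the Lagrangian form is only determined up to a closed summand (because $\beta$ is), so \Eq{newLambda} should be read as asserting that $R_*\lambda_V + \cL_W\eta$ is one permissible representative of $\lambda_W$, not the unique one. Keeping the naturality identities pointed in the correct direction (in particular $R_*\cL_V = \cL_{R_*V}R_*$, not its inverse) and invoking exactness of $R$ at the right moment to substitute $R_*\alpha \mapsto \alpha - d\eta$ are the two places where a slip would be easy; beyond that, the argument is a short verification once the differential-geometric machinery is lined up.
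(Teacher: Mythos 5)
Your proposal is correct and follows essentially the same route as the paper: both establish $\imath_W\Omega = R_*(\imath_V\Omega) = d(R_*\beta_V)$ via naturality and volume invariance, and both reduce the Lagrangian-form claim to the chain $\cL_W\alpha = \cL_W(R_*\alpha + d\eta) = R_*(\cL_V\alpha) + d(\cL_W\eta) = d(R_*\lambda_V + \cL_W\eta)$, which you simply read in the reverse direction by applying $d$ to the candidate form. Your added remarks on non-uniqueness of $\lambda_W$ and the explicit primitive $\beta_W = R_*\beta + d\imath_W\eta$ are sound but not needed for the paper's argument.
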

\begin{proof}
  That $W$ is globally Liouville follows directly from \Eq{naturally} and the invariance of $\Omega$ under $R_*$:
  \[
    \imath_W \Omega = R_*(\imath_V \Omega) = d(R_*\beta_V) := d\beta_W,
  \]
  where $\imath_V \Omega = d\beta_V$.  Using \Eq{LieDeriv} and \Def{LagrangianForm}, the Lagrangian form for $W$ is derived by
\[    
	\cL_W \alpha = \cL_W( R_* \alpha + d\eta ) 
                = R_*( \cL_V \alpha ) + \cL_W( d\eta ) 
                = d( R_* \lambda_V + \cL_W \eta ),
\]
which gives \Eq{newLambda}.
\end{proof}

As we will see in \Sec{lobeVolumes}, the form $\lambda$ plays a central role in the computation of the volumes of lobes formed by the intersection of past- and future-invariant regions; it is analogous to the phase space Lagrangian we used to compute such volumes for the 2D case \cite{Mosovsky11}.

\section{Action-Flux Formulas for Lobe Volumes} \label{sec:lobeVolumes}

In this section we will obtain the action-flux formulas to compute the transport fluxes. As a standing assumption, $V$ will denote a transitory vector field \Eq{transitoryODE} that is globally Liouville with respect to an exact volume form $\Omega$ and that has a complete flow $\vphi_{t,t_0}$.

Suppose that $\cP_0,\, \cF_\tau \subseteq M$ are past- and future-invariant regions, respectively.  By definition, trajectories within $\cP_0$ at $t=0$ will remain within it for all $t < 0$: $\cP_0$ is coherent under $P$ in the Lagrangian sense. Similarly, $\cF_\tau$ is coherent under the future vector field $F$.  As a result, any transport between $\cP_0$ and $\cF_\tau$ must occur during the transition interval $[0,\tau]$, and the transported phase space itself is the collection of regions $\cR_t = \cP_t \cap \cF_t$, i.e., the intersection of $\cP$ and $\cF$ in any slice. We will call the components of the slices $\cR_t$, ``lobes.''  Since $V$ is Liouville, the intersection volume, or \emph{flux} from $\cP_0$ to $\cF_\tau$,
\beq{flux}
  \Phi = \Vol(\cP_t \cap \cF_t),
\eeq
is independent of time. In particular, using the transition map \Eq{transitionMap}, $\Phi = \Vol(T(\cP_0) \cap \cF_\tau)$.

Let $\cU = \partial \cP$ and $\cS = \partial \cF$ be the boundaries of the orbits of the future- and past-invariant regions so that the lobes, $\cR_t$, are bounded by pieces of the slices $\cU_t$ and $\cS_t$. As we will see below a key set in the action-flux formulas will be $\cI = \cU \cap \cS$, the set of orbits at the intersection of the lobe boundary components. 

In some cases $\cU$ and $\cS$ will be pieces of stable and unstable manifolds of future- and past-hyperbolic sets. For example, suppose that $\cP_0$ and $\cF_\tau$ are topological balls whose boundaries are portions of the closures of the codimension-one unstable and stable manifolds of past- and future-hyperbolic equilibria, $p$ and $f$, respectively.\footnote
{For example, $p$ could be the equilibrium $p^1$ of the microdroplet flow of \Sec{microdroplet}, see \Fig{contours}(a), and thus $\cP_0$ is the droplet itself.}
Under the transitory flow, portions $\cU_t \subset \Wu_t(p)$ and $\cS_t \subset \Ws_t(f)$ may intersect to bound a lobe $\cR_t$, as sketched in \Fig{simpleLobe}(a), which is also a ball. In this case, the intersection set, 
\beq{intersectionCurve}
  \cI_t = \cU_t \cap \cS_t ,
\eeq
is an ($n-2$)-dimensional sphere.  It is clear that $\Wu_t(p)$ is past invariant and $\Ws_t(f)$ is future invariant (though the lobe boundary surfaces $\cU_t$ and $\cS_t$  themselves are not), and, in this case, that $\cI_t \to p_0$ as $t \to -\infty$ and $\cI_t \to f_\tau$ as $t \to \infty$. 

\InsertFig{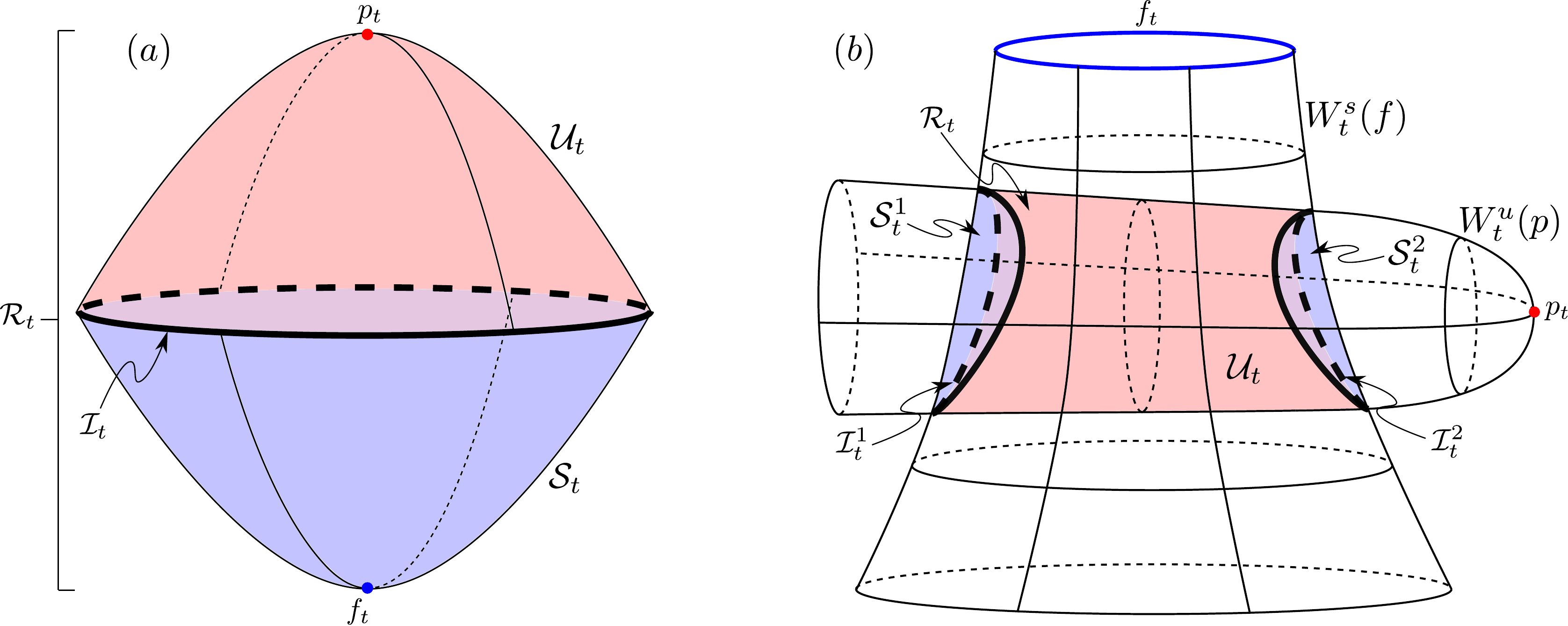}{(a) Simple lobe $\cR_t \subset M$ formed by the intersection of $\cU_t \subset \Wu_t(p)$ and $\cS_t \subset \Ws_t(f)$.  (b) Similar to (a), except there are three surfaces that make up the lobe boundary: $\cS_t^1,\, \cS_t^2 \subset \Ws_t(f)$ and $\cU_t \subset \Wu_t(p)$.  In both (a) and (b), intersections of the manifolds are shown as bold black curves and only the lobe boundaries $\partial \cR_t$ are shaded.}{simpleLobe}{.9\linewidth}

However, $\cI_t$ need not be connected, $p$ and $f$ need not be single orbits, and $\cR_t$ need not be a topological ball. For example, in \Fig{simpleLobe}(b), $f_t$ now represents a loop whose orbit is a future-hyperbolic periodic orbit, while $p$ remains a past-hyperbolic equilibrium. In this case the lobe boundary contains two disjoint intersection curves, $\cI_t^1$ and $\cI_t^2$, both of which contract to $p_0$ in the past, but in forward time approach the periodic orbit $f_\tau$.  As will become apparent, to compute the volume of $\cR_t$ using the action-flux formulas, it is convenient (though not necessary) that the surface areas of $\cU_t$ and $\cS_t$ converge to zero in the appropriate limit in time.

Since $V$ is globally Liouville, the flux \Eq{flux} is independent of time.  It is this flux that we wish to compute, as it represents the portion of the past-invariant region $\cP_0$ transported to the future-invariant region $\cF_\tau$. Stokes's theorem, using \Eq{alphaDefine}, allows for an immediate reduction of the volume integral for $\cR_t$ to an integral over its boundary:
\beq{StokesTheorem}
  \Phi = \textrm{Vol}(\cR_t) = \int_{\cR_t} \Omega = \int_{\partial \cR_t} \alpha .
\eeq
For the lobe $\cR_t$ in \Fig{simpleLobe}(a), $\partial \cR_t = \cU_t + \cS_t$, while in \Fig{simpleLobe}(b) there are two disjoint surfaces $\cS^1_t,\, \cS^2_t \subset \Ws_t(f)$ on the boundary, so that $\partial \cR_t = \cU_t + \cS^1_t + \cS^2_t$.  In general, $\partial \cR_t$ can be decomposed into pieces
$\cU^j_t \subset \partial \cP_t$ and $\cS^i_t \subset \partial \cF_t$ of the boundaries of the regions $\cP_t$ and $\cF_t$.  Thus \Eq{StokesTheorem} becomes a sum of integrals of $\alpha$ over such submanifolds. It is to this computation that we now turn.

Evaluation of \Eq{StokesTheorem} could be performed by numerical evaluation of the $(n-1)$-dimensional surface integrals; however, this requires an accurate representation of the surfaces $\cU_t,\cS_t \subset \partial \cR_t$, which also implicitly requires knowing their time evolutions.  This additional temporal information is essentially ``wasted'' since it is not explicitly used to compute the surface integrals.  Furthermore, the exponential stretching typical of chaos can make obtaining well-resolved representations of $\cU_t$ and $\cS_t$ computationally prohibitive.

Our alternative reduces the dimension of the Lagrangian information necessary to evaluate \Eq{StokesTheorem} by computing the ``generalized actions'' of the orbits on the boundary $\cI_t$, and requires evaluating an $(n-2)$-dimensional spatial integral plus a temporal integral.  In the extended phase space, this corresponds to an $(n-1)$-dimensional integral, but it explicitly uses the time evolutions of $\partial \cU_t$ and $\partial \cS_t$ that must be computed in any case.  Our formulation applies to the case where $\partial R_t$ has components that are not stable or unstable manifolds of any future- or past-hyperbolic orbit (e.g., see \Sec{microdroplet}).  Indeed, since the orbit of \emph{any} subset of $M$ is invariant under $\vphi$ in $M \times \bR$, the result applies to general codimension-one submanifolds $\Gamma_t \subset M$.

\begin{theorem}\label{thm:mainResult}
 Suppose that $\Omega = d\alpha$ is an exact volume form on $M$ and $\Gamma_t$ is a codimension-one slice of an invariant set of the flow $\vphi$ of a globally Liouville vector field $V$.  Then for any $r \in \bR$,
 \beq{mainResult}
   \int_{\Gamma_t} \alpha = \int_r^t \left( \int_{\partial \Gamma_s} \lambda \right) ds + \int_{\Gamma_r} \alpha.
 \eeq
\end{theorem}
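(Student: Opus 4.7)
The plan is to express $\int_{\Gamma_t}\alpha$ as a pullback integral on the fixed slice $\Gamma_r$, differentiate in $t$, and then reduce to a boundary integral by Stokes. The starting observation is that because $\Gamma$ is an invariant set in the extended phase space, its time-$t$ slice is simply
\[
  \Gamma_t = \vphi_{t,r}(\Gamma_r),
\]
so the change-of-variables formula for integration of differential forms gives
\[
  \int_{\Gamma_t}\alpha \;=\; \int_{\Gamma_r}\vphi_{t,r}^{\,*}\alpha.
\]
This trades a moving domain for a moving integrand on a fixed domain, which is what makes differentiation in $t$ tractable.

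Next I would differentiate under the integral sign. Even though $V$ is time-dependent, the cocycle identity $\vphi_{t+h,r} = \vphi_{t+h,t}\circ\vphi_{t,r}$ together with the fact that $\vphi_{t+h,t}$ is, to first order in $h$, the flow of the instantaneous field $V(\cdot,t)$, yields the nonautonomous Lie-derivative formula
\[
  \tfrac{d}{dt}\vphi_{t,r}^{\,*}\alpha \;=\; \vphi_{t,r}^{\,*}(\cL_V\alpha).
\]
Combined with \Def{LagrangianForm}, which says $\cL_V\alpha = d\lambda$, and pulling the result back onto $\Gamma_t$, one obtains
\[
  \tfrac{d}{dt}\int_{\Gamma_t}\alpha
  \;=\; \int_{\Gamma_r}\vphi_{t,r}^{\,*}(d\lambda)
  \;=\; \int_{\Gamma_t} d\lambda.
\]

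Applying Stokes's theorem then reduces the integrand to a boundary term, $\int_{\Gamma_t} d\lambda = \int_{\partial\Gamma_t}\lambda$, so that
\[
  \tfrac{d}{dt}\int_{\Gamma_t}\alpha \;=\; \int_{\partial\Gamma_t}\lambda.
\]
Integrating this identity in $s$ from $r$ to $t$ and invoking the fundamental theorem of calculus produces \Eq{mainResult}.

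The step I expect to require the most care is the nonautonomous Lie-derivative formula, since $V$ depends explicitly on time and thus the standard autonomous identity does not apply verbatim. It does follow from the cocycle identity above provided $V$ is regular enough in $(\bx,t)$ that the two-parameter flow $\vphi_{t,r}$ is $C^1$ in $t$, which is guaranteed for the transitory fields considered here (e.g.\ with the $C^1$ transition \Eq{cubicTransition}). A secondary point is the interchange of $d/dt$ with the integral over $\Gamma_r$; this is routine under mild integrability/compactness assumptions on $\Gamma_r$, which are implicit in the finiteness of $\int_{\Gamma_t}\alpha$. Aside from these regularity checks, no further ingredients are needed beyond exactness of $\Omega$, \Def{LagrangianForm}, and Stokes's theorem.
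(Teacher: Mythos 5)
Your proposal is correct and follows essentially the same route as the paper: both rest on the identity $\cL_V\alpha = d\lambda$ from \Def{LagrangianForm}, the pullback/change-of-variables relation between $\Gamma_t$ and a fixed slice, and Stokes's theorem, differing only in whether one integrates the form identity in time before or after integrating over the slice. Your explicit attention to the nonautonomous Lie-derivative formula is a point the paper's proof glosses over, but it is not a substantive departure.
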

\begin{proof}
  Differentiating $\alpha$ along the vector field $V$, employing Cartan's homotopy formula \Eq{LieIdentity} (see \App{appendix}), and using \Eq{Liouville} gives
  \[
    \frac{d}{dt} \alpha = \cL_V \alpha = d( \imath_V \alpha + \beta ) = d\lambda.
  \]
Integrating this expression from $r$ to $t$ using \Eq{naturally} then results in 
  \[
   \alpha - \vphi_{r,t}^*\alpha = \int_r^t \frac{d}{ds} \vphi_{s,t}^* \alpha \, ds = \int_r^t d( \vphi_{s,t}^*\lambda )\, ds,
  \]
  for any $r$.  A second integration over $\Gamma_t$ and rearrangement then gives
\begin{align*}
   \int_{\Gamma_t} \alpha &= \int_r^t \left( \int_{\Gamma_t} 
         d(\vphi_{s,t}^* \lambda) \right) ds + 
                             \int_{\Gamma_t} \vphi_{r,t}^* \alpha \notag \\
                          &= \int_r^t \left( \int_{\Gamma_s} d\lambda \right) ds 
                            + \int_{\Gamma_r} \alpha ,\notag
\end{align*}
which immediately reduces to \Eq{mainResult} using Stokes's theorem.
\end{proof}

It is interesting to note that the flow implicit in the time-integration in \Eq{mainResult} may be chosen independently of the original transitory flow, provided it is globally Liouville.  This observation will be used to simplify the computations in \S\ref{sec:abc}-\ref{sec:microdroplet}, and an example of its implementation is discussed in \App{SpeedUp}.

Equation (\ref{eq:mainResult}) simplifies if the surface area of $\Gamma_t$ limits to zero in either backward or forward time.  This typically occurs when $\Gamma_t$ is a compact subset of an invariant manifold of a past- or future-hyperbolic orbit (e.g., as in \Fig{simpleLobe}).  Under these assumptions, one can take the limit $r \to \pm \infty$ in \bTh{mainResult} to obtain the following.
\begin{corollary} \label{cor:corollary}
  Under the assumptions of \bTh{mainResult}, if the $\alpha$-surface area of $\Gamma_t$ vanishes as $t \to -\infty$,
\beq{pastAction}
    \int_{\Gamma_\tau} \alpha = \int_{-\infty}^\tau 
    	\left( \int_{\partial \Gamma_s} \lambda \right) ds  ,
\eeq
and if the $\alpha$-surface area of $\Gamma_t$ vanishes as $t \to +\infty$,
\beq{futureAction}
    \int_{\Gamma_\tau} \alpha = -\int_\tau^\infty 
    		\left( \int_{\partial \Gamma_s} \lambda \right) ds  .
\eeq
\end{corollary}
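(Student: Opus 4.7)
The plan is to derive both statements directly from Theorem \ref{thm:mainResult} by taking the appropriate limit in the parameter $r$ and using the vanishing surface-area hypothesis to kill the boundary term $\int_{\Gamma_r}\alpha$.

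Concretely, I would start from the identity
\[
 \int_{\Gamma_\tau}\alpha \;=\; \int_r^\tau \!\left(\int_{\partial \Gamma_s}\lambda\right)ds \;+\; \int_{\Gamma_r}\alpha
\]
given by \bTh{mainResult} with $t=\tau$. For \Eq{pastAction}, I would let $r\to -\infty$: the hypothesis that the $\alpha$-surface area of $\Gamma_t$ vanishes as $t\to-\infty$ means precisely that $\int_{\Gamma_r}\alpha\to 0$, so the remaining time integral converges and produces the stated improper integral from $-\infty$ to $\tau$. For \Eq{futureAction}, I would instead let $r\to +\infty$, so that $\int_{\Gamma_r}\alpha\to 0$ by the forward hypothesis; then
\[
 \int_{\Gamma_\tau}\alpha \;=\; \lim_{r\to+\infty}\int_r^\tau \!\left(\int_{\partial\Gamma_s}\lambda\right)ds \;=\; -\int_\tau^{\infty}\!\left(\int_{\partial\Gamma_s}\lambda\right)ds,
\]
which is \Eq{futureAction}.

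The only genuine obstacle is justifying existence of the improper time integral, since Theorem \ref{thm:mainResult} only asserts the identity for finite $r$. Fortunately, once the boundary term $\int_{\Gamma_r}\alpha$ has a limit (here, zero), the rearranged identity
\[
 \int_r^\tau\!\left(\int_{\partial\Gamma_s}\lambda\right)ds \;=\; \int_{\Gamma_\tau}\alpha - \int_{\Gamma_r}\alpha
\]
forces the left-hand side to converge as $r\to\mp\infty$; so convergence of the improper integral is a consequence, not a separate hypothesis. In other words, no independent integrability assumption on $\int_{\partial\Gamma_s}\lambda$ is needed beyond the vanishing of the surface area, which is the most delicate ingredient and the reason the corollary is stated as an \emph{if} rather than an identity in full generality. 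With that observation the proof reduces to two applications of the theorem together with the definition of an improper integral, and no further calculation is required.
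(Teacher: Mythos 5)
Your proof is correct and follows exactly the route the paper intends: the corollary is obtained by setting $t=\tau$ in \bTh{mainResult} and letting $r\to\mp\infty$, with the vanishing $\alpha$-surface area killing the boundary term $\int_{\Gamma_r}\alpha$. Your added observation that convergence of the improper time integral is forced by the rearranged identity, rather than needing a separate hypothesis, is a worthwhile clarification but does not change the argument.
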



We will refer to (\ref{eq:mainResult})--(\ref{eq:futureAction}) as the \emph{action-flux} formulas.  Since $\lambda$ is the $n$-dimensional analog of the Lagrangian, its integral along a codimension-two set of orbits gives a \emph{generalized action} for that set.  Thus, the action-flux formulas, in conjunction with \Eq{StokesTheorem}, allow us to calculate the flux by computing the generalized action of sets of key orbits on the lobe boundary.  

For example, suppose that a lobe boundary $\partial \cR_\tau$ can be decomposed into $N_{\pm}$ connected submanifolds $\Gamma^{i\pm}_\tau$ that collapse in forward or backward time, respectively.  Then \Eq{StokesTheorem}, with \Eq{pastAction} and \Eq{futureAction}, yields
\beq{volume}
  \Phi = 
       \sum_{j=1}^{N_-} \int_{-\infty}^\tau \left( \int_{\partial \Gamma^{j-}_s} \lambda \right) ds
       - \sum_{i=1}^{N_+} \int_\tau^\infty \left( \int_{\partial \Gamma^{i+}_s} \lambda \right) ds .
\eeq
Implicit in \Eq{volume} is a choice of orientation on the boundaries.  We will always orient $\cR_t$ with respect to a right-handed outward normal; this induces orientations on $\Gamma^{j-}_t$ and $\Gamma^{i+}_t$, and, in turn, on their boundaries $\partial\Gamma^{j-}_t$ and $\partial\Gamma^{i+}_t$.  

\section{Example: Transitory ABC Flow} \label{sec:abc}

The ABC vector field \cite{Arnold65},
\beq{ABCVector} 
  \dot{\mathbf{x}} = 
  \left( \begin{matrix}
          A \sin z + C \cos y \\
          B \sin x + A \cos z \\
          C \sin y + B \cos x
  \end{matrix} \right),
\eeq
models a steady, inviscid, incompressible Beltrami flow on $\bT^3$. 
Interestingly, it is an exact solution to the Navier-Stokes equations, provided an appropriate forcing term is added to counter the effects of viscous dissipation; moreover for small Reynolds numbers, it is stable \cite{Galloway87}.  Despite the steadiness of the flow, its streamlines are chaotic \cite{Henon66, Dombre86, Zhao93, Huang98}; hence \Eq{ABCVector} is a prototypical example of a laminar vector field with complicated Lagrangian dynamics. 

The ABC vector field is locally Liouville; however, since there is no exact volume form on $\bT^3$, it is not globally Liouville on this manifold.  In order to apply the action-flux formulas of \Sec{lobeVolumes}, we lift the $z$ coordinate to $\bR$, letting the phase space become $M = \bT^2 \times \bR$. In this case, the standard volume $\Omega = dx \wedge dy \wedge dz$ is exact on $M$, and we take $\alpha = z\, dx \wedge dy$.  With this choice,  \Eq{ABCVector} is Beltrami, and \Eq{BeltramiBeta} and \Eq{BeltramiLambda} give
\beq{abcLambda} \begin{split}
          \beta &= (A \sin z +C\cos y)dx + (B \sin x + A\cos z)dy + (C\sin y + B\cos x)dz, \\
          \lambda  &= (A \sin z +C\cos y)(dx+zdy) + (B \sin x + A\cos z)(dy -zdx) \\
	           &\hskip.3in + (C\sin y + B\cos x)dz.
\end{split} \eeq

Several recent studies have used finite-time Lyapunov exponents to analyze both steady and unsteady generalizations of \Eq{ABCVector} \cite{Haller01, Sadlo09}; these have focused primarily on extracting Lagrangian coherent structures by determining regions that experience maximal local stretching.  Here, we study a transitory ABC flow in which the identification of coherent structures of $P$ and $F$ is trivial, and focus on computing the flux between these structures.

\subsection{Transitory System} \label{sec:transitoryABC}
Modulating the coefficients $A$, $B$, and $C$ of \Eq{ABCVector} over a compact temporal interval makes it transitory in the sense of \Eq{transitoryODE}. We choose to set $C=0$ for $t<0$ and $B=0$ for $t>\tau$, to give past and future vector fields
\beq{pastFuture}
  P(\bx) = \left( \begin{array}{c}
                    A \sin z \\
                    B \sin x + A \cos z \\
                    B \cos x
                  \end{array} \right),
  \qquad 
  F(\bx) = \left( \begin{array}{c}
                    A \sin z + C \cos y \\
                    A \cos z \\
                    C \sin y
                  \end{array} \right ). \\
\eeq
The full transitory vector field is then given by the convex combination
\beq{transitoryABC}
  V(\bx,t) = \left( \begin{array}{c}
                        A \sin z + s(t)C \cos y \\
                        (1-s(t))B \sin x + A \cos z \\
                        s(t)C \sin y + (1-s(t))B \cos x
                      \end{array} \right),
\eeq
as in \Eq{transitoryModel}, and we use the cubic transition function \Eq{cubicTransition}.
We will denote the flows of $P$ and $F$ by $\vphi^P$ and $\vphi^F$, respectively, and the flow of the full transitory vector field \Eq{transitoryABC} by $\vphi$.  

The autonomous vector fields $P$ and $F$ are integrable \cite{Dombre86}; indeed, they have invariants
\beq{levelSetFunc} \begin{split}
  H_P(\bx) &= B \sin x + A \cos z, \\
  H_F(\bx) &= A \sin z + C \cos y,
\end{split} \eeq
respectively. Moreover, these functions act as Hamiltonians that generate the flows of $P$ in the $(x,z)$ plane and $F$ in the $(y,z)$ plane. Since $\dot y = H_P$ for $P$ and $\dot x = H_F$ for $F$, the motion in these transverse directions is trivial; consequently, the flows of $P$ and $F$ can be completely characterized by their two-dimensional portraits, see \Fig{streamlines}. Note that the level sets of the invariants \Eq{levelSetFunc} become two-tori in $M$, with the exception of certain critical sets, which correspond to separatrices. 

\InsertFigTwo{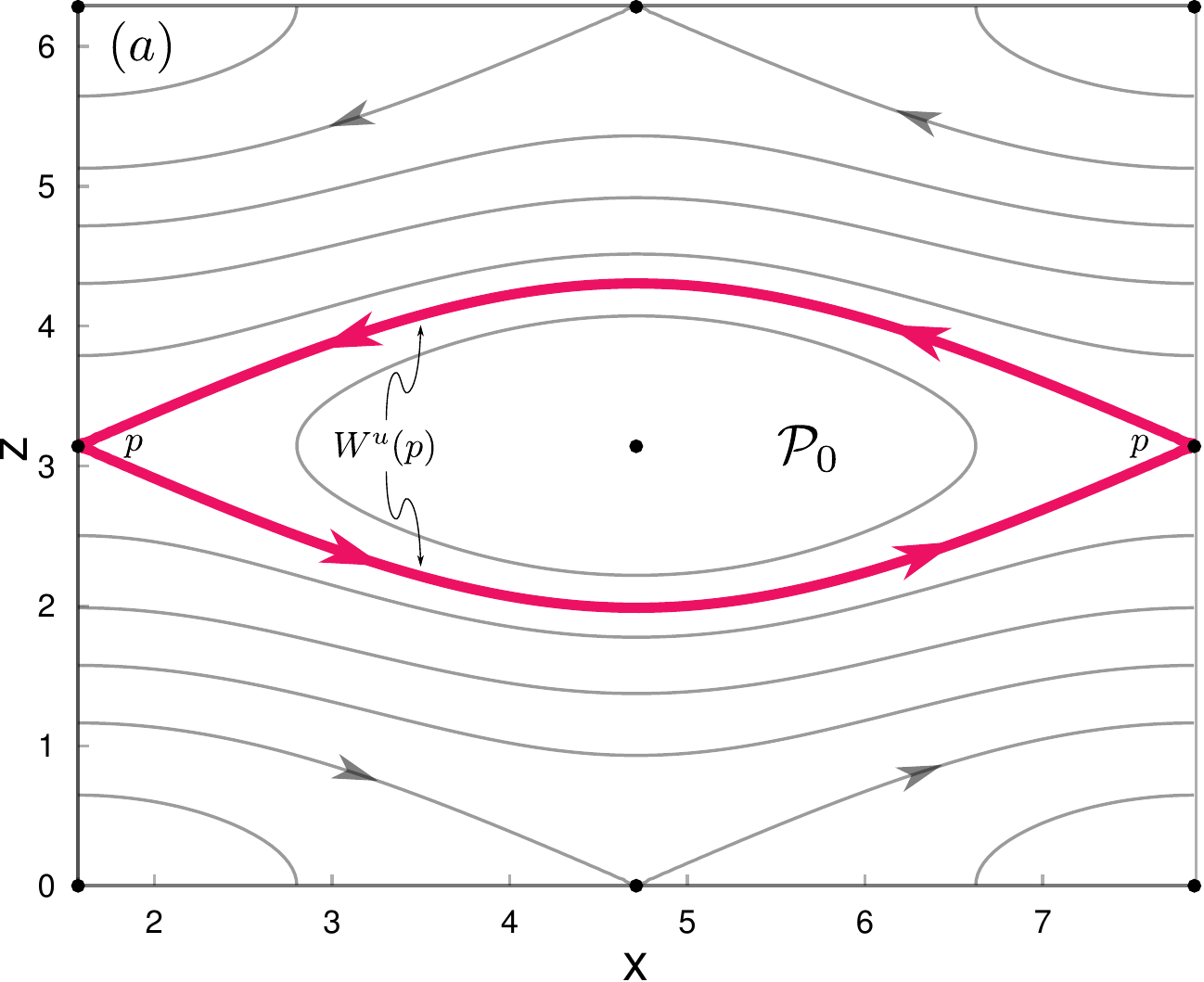}{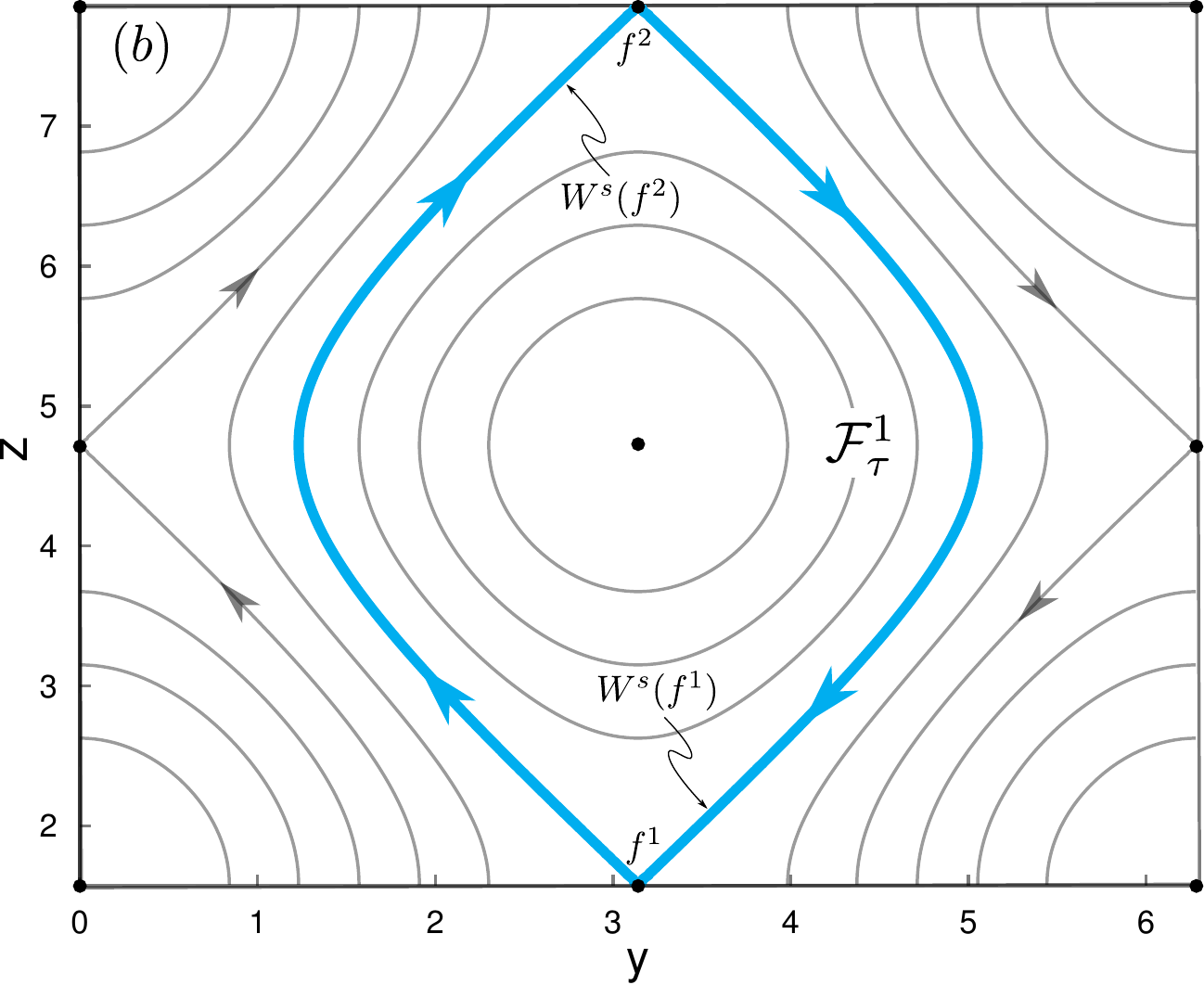}{Poincar\'e sections of the invariant two-tori of (a) $P$ and (b) $F$ for $(A,B,C)=(1,0.3,1.5)$. The invariant manifolds of the equilibrium $p$ of $P$ are the bold (red) curves in (a), and those of the equilibria  $f^k$ of $F$ are the bold (blue) curves in (b). The ranges of $(x,y,z)$ are shifted to visualize the resonance zones $\cP_0$ and $\cF_\tau^1$.}{streamlines}{.45\linewidth}

In each $2\pi$ range of $z$, $P$ and $F$ generally have two elliptic and two hyperbolic periodic orbits; in the special case $A=B=C=1$, these become lines of fixed points.  
We will set $A=1$, without loss of generality, and assume that
\beq{coeffRestriction}
  0 < B < A = 1 < C;
\eeq
with this choice, the resulting phase portraits are like those in \Fig{streamlines}. The past- and future-invariant regions that we will analyze are bounded by the manifolds of the hyperbolic periodic orbit
\beq{p} \notag
  p = \{(\tfrac{\pi}{2}, y, \pi) \:\: \big| \:\: y \in [0,2\pi] \}
\eeq
of $P$ and the hyperbolic periodic orbits
\beq{fk}
  f^k = \{(x, \pi, \tfrac{\pi}{2} + 2\pi (k-1)) \:\: \big| \:\: x \in [0,2\pi]\} ,
\eeq
of $F$ (two of these are shown in \Fig{streamlines}(b)).

The invariant manifolds---under $P$---of $p$ correspond to the level set
\beq{HPstar}
	H_P(\bx) = H_P^* := B-A .
\eeq
Because of the horizontal periodicity of $M$, $\Wu_0(p)$ forms a pair of homoclinic connections, shown in cross-section in \Fig{streamlines}(a). Each is homeomorphic to a two-torus in $M$, as shown in \Fig{Intersection3D_1}, and together, these manifolds bound a past-invariant region $\cP_0$. Similarly, the invariant manifolds---under $F$---of $f^k$ correspond to the level set
\beq{HFstar}
	H_F(\bx) = H^*_F := A - C .
\eeq
Since $M$ is unbounded in $z$, these form heteroclinic connections, see \Fig{streamlines}(b) and \Fig{Intersection3D_1}(a). The heteroclinic connections between $f^{k}$ and $f^{k+1}$ bound a future-invariant region $\cF_\tau^k$; for each $k \in \bZ$ these are just shifted copies of $F_{\tau}^1$, shown in \Fig{Intersection3D_1}.   

\InsertFig{Intersection3D_1}
{(a) The past- and future-invariant regions for \Eq{transitoryABC}
with $(A,B,C) = (1.0, 0.3, 1.5)$ are bounded by the invariant manifolds $\Wu(p)$ (red) and $\Ws(f^{1,2})$ (blue), respectively. The lobe shown corresponds to $\cR_\tau = \cP_\tau \cap \cF_\tau^1$ for $\tau = 0$. Its boundary consists of three surfaces $\cU_\tau \subset \Wu(p)$, $\cS_\tau^1 \subset \Ws(f^2)$, and $\cS_\tau^2 \subset \Ws(f^1)$, which intersect in two loops $\cI_\tau^k$ (green) shown in (b).}{Intersection3D_1}{\linewidth}

It is important to remember that $\cP_0$ and $\cF_\tau^k$ are \emph{not} invariant under the transitory flow $\vphi$. However, they are Lagrangian coherent structures, or ``resonance zones," of the past and future vector fields, respectively.
We consider the problem of computing the transport from $\cP_0$ to each of the $\cF_{\tau}^k$; that is, the volume of the lobes $\cR_\tau^k = \cP_\tau \cap \cF_\tau^k$. 
There is always at least one such lobe for any $\tau$ and choice of parameters subject to \Eq{coeffRestriction}, and when $\tau$ is finite there are only finitely many.  The accompanying movie file (\texttt{ABC Flow Lobes Vs.~Transition Time}) shows the lobes at $t=\tau$, for increasing values of $\tau$ and for the same parameters as in \Fig{Intersection3D_1}.

\subsection{Computation} \label{sec:computation}

The volumes of the lobes $\cR^k_\tau$ will be computed using the action-flux formulas \Eq{pastAction} and \Eq{futureAction}, which rely on knowing the orbits of the intersection curves
\[
  \cI_\tau = \Wu_\tau(p) \cap  \bigcup_{k \in \bZ} \Ws_\tau(f^k).
\]
As an example, two such curves, $\cI^1_\tau$ and $\cI^2_\tau$, are shown in \Fig{Intersection3D_1}(b).  For the moderate values of $\tau$ that we study below, only the intersections of $\cP_\tau$ with $\cF^1_\tau$ and $\cF^0_\tau$ are nonempty, and thus the only lobes formed are $\cR^1_\tau$ (the \emph{primary} lobe) and $\cR^0_\tau$ (the \emph{secondary} lobe).  To simplify notation, we adopt the convention of referring to elements of the secondary lobe with a ``tilde'' (i.e. $\tilde \cR_\tau$, $\tilde \cI_\tau$, etc.), and omit the superscripts for both lobes (cf.~\Fig{twoLobes}).

The computation of the intersection curves is done with a root finding and continuation method, and is simplified by using the level sets \Eq{HPstar} and \Eq{HFstar}. It is convenient to parameterize the past manifold as $G: \bT^2 \to \Wu_0(p) = \partial \cP_0$, and search for intersections in parameter space. Using $(u,v)$ as the parameters, \Eq{HPstar} gives $ G(u,v) = ( x(v), u, z(v) )$, where
\bsplit{parameterization}
   x(v) &= \left\{ \begin{array}{ll}
                   2v + \tfrac{\pi}{2}, &v \in [0,\pi), \medskip \\
                   \tfrac{9\pi}{2} - 2v, &v \in [\pi,2\pi],
                 \end{array} \right. \\
  z(v) &= \left\{ \begin{array}{ll}
                      \cos^{-1}\left( 2\frac{B}{A} \sin^2 v-1 \right), 
                         &v \in [0,\pi), \medskip \\
                      2\pi - \cos^{-1}\left(2\frac{B}{A}\sin^2 v-1 \right), 
                        &v \in [\pi,2\pi],
                    \end{array} \right.
\esplit
see the sketch in \Fig{param}. Note that increasing $v$ corresponds to a counterclockwise circuit around the separatrix loop in the $xz$-plane, while $u$ is simply the $y$-coordinate.

\InsertFig{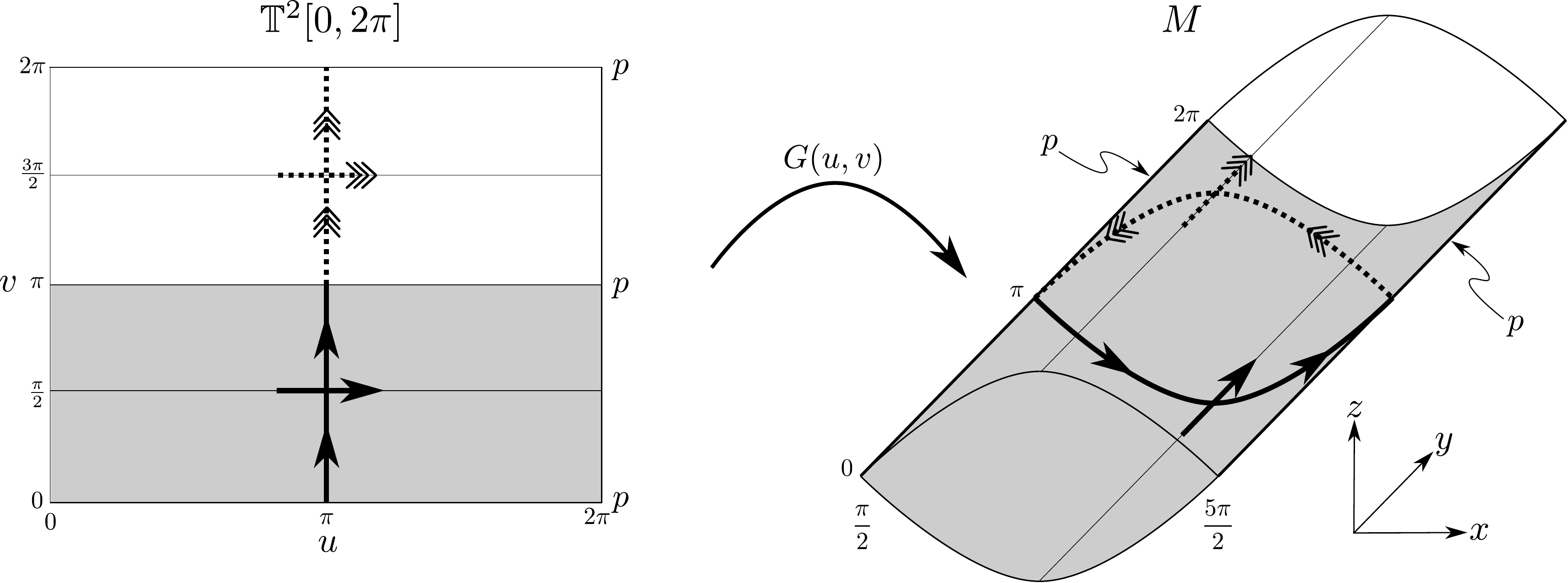}{Parameterization of $\Wu_0(p)$.  Arrows along the manifold are included to indicate orientation.}{param}{\linewidth}

The parametric representation of the intersection curves is then given in terms of the level set \Eq{HFstar} by
\[
  \cI_G = \big\{ (u,v) \in \bT^2 \:\: \big| \:\: H_F(T(G(u,v))) = H_F^* \big\},
\]
where $T$ is the transition map \Eq{transitionMap}.  Two examples are shown in \Fig{Intersection2D_1}; see \App{SpeedUp} for a discussion of the continuation techniques used in the computation of these curves. The corresponding intersections in phase space then become
$
  \cI_t = \vphi_{t,0}(G(\cI_G)),
$
and the curves shown in \Fig{Intersection3D_1}(b) are the phase-space representation
of those in \Fig{Intersection2D_1}(a) under this mapping, with $t = \tau = 0$.

\InsertFig{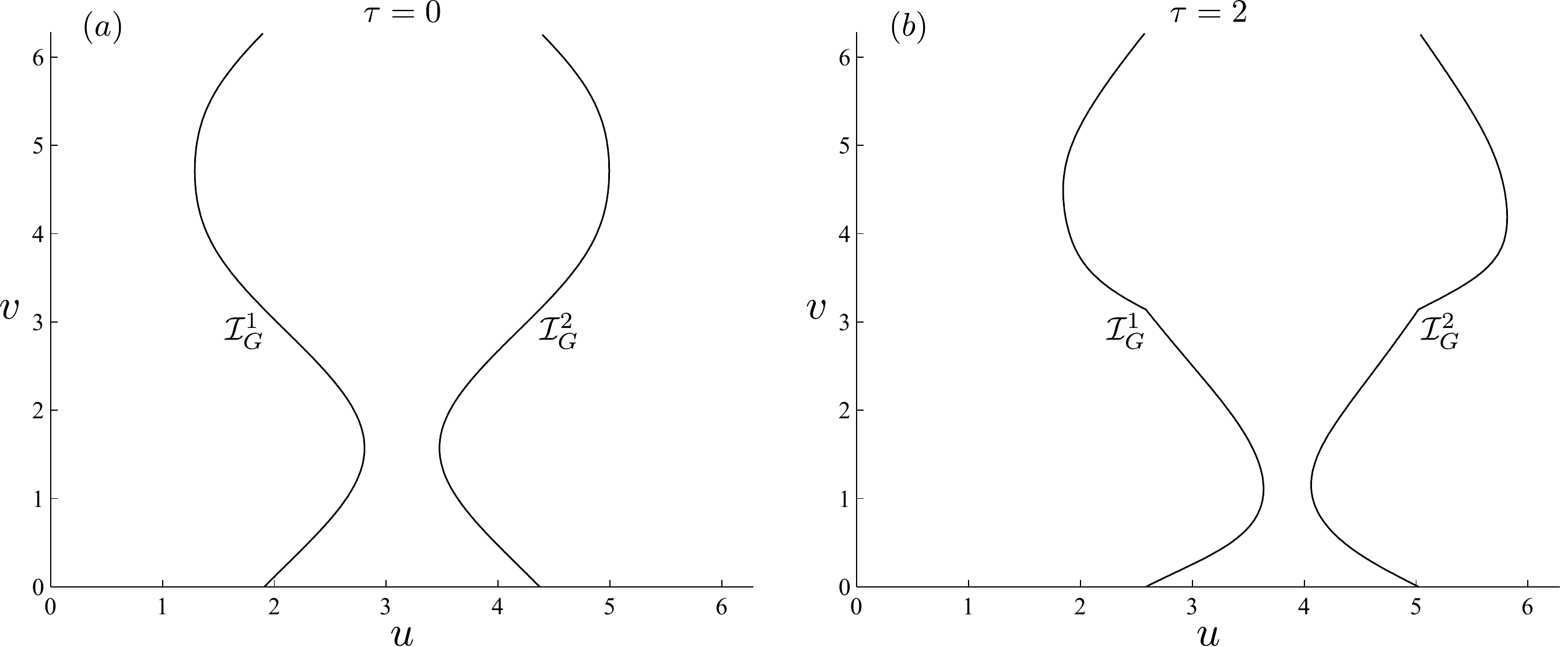}{Intersection curves $\cI_G$ in parameter space for $(A,B,C) = (1, 0.3, 1.5)$ with $\tau=0$ (a) and $\tau=2.0$ (b).  The accompanying movie file (\texttt{ABC Flow Manifold Intersections Vs.~Transition Time}) shows the dependence of $\cI_G$ on transition time, for $\tau \in [0.5, 7.5]$. }{Intersection2D_1}{\linewidth}

Given numerically-computed curves $\cI_G$, it is straightforward to compute the flux $\Phi$ using the action-flux formulas. We first discuss the case where there is one lobe, the primary lobe $\cR_\tau = \cP_\tau \cap \cF_\tau^1$, and its boundary has the form $\partial \cR_\tau = \cU_\tau \cup \cS_\tau^1 \cup \cS_\tau^2$ as in \Fig{Intersection3D_1}(a). This occurs when $B<\tfrac12$ and $\tau$ is small enough. The flux is then given by
\beq{simplestABCLobe}
  \Phi = \textrm{Vol}(\cR_\tau) = \int_{\cU_\tau} \alpha + \int_{\cS^1_\tau} \alpha + \int_{\cS^2_\tau} \alpha.
\eeq
Note that under the flow $\vphi$, $\cS^1_t \to f^2$ and $\cS^2_t \to f^1$ as $t \to +\infty$, and moreover, since these surfaces lie on the local manifolds $\Ws_t(f^k)$, their surface areas limit to zero.  Thus, by \Eq{futureAction}, 
\beq{stableIntegral}
  \int_{\cS_\tau^{j}} \alpha = -\int_{\tau}^\infty \left( \int_{\cI_s^{j}} \lambda \right) ds,
\eeq
where $\cI_\tau^j = \partial \cS_\tau^j$, and the orientations of these contours are chosen consistent with a right-handed outward normal to the lobe.
Further note that over the range of integration in \Eq{stableIntegral}, $\vphi = \vphi^F$.  It is also possible, and may be more efficient, to compute these integrals \emph{backward} in time under $\vphi^F$ (see \Eq{stableIntegralAlt} in \App{SpeedUp}).

To compute the integral over $\cU_\tau$ in \Eq{simplestABCLobe} we first use \Eq{mainResult} to pull the integration back to $t = 0$:
\beq{transformation}
  \int_{\cU_\tau} \alpha = \int_0^\tau \left( \int_{\partial \cU_s} \lambda \right) ds + \int_{\cU_0} \alpha.
\eeq
The first integral on the right-hand side is easily computed numerically, as it is over the compact transition interval. Computation of the second is a bit more difficult: since $\cU_0$ encircles $\Wu_0(p)$, it does not collapse to $p$ under $\vphi^P$, and consequently, its $\alpha$-surface area does not vanish in either direction of time. To get around this, we can divide $\cU_0$ into subsurfaces that do collapse under $\vphi^P$, reducing the last integral in \Eq{transformation} to a sum of integrals over these subsurfaces, each of which can be evaluated using \Eq{pastAction}--\Eq{futureAction} (see \Eq{unstableIntegral} and technical remarks on this splitting in \App{SpeedUp}).  With these techniques, the flux \Eq{simplestABCLobe} can be computed using \Eq{stableIntegral} and \Eq{transformation}. Several additional techniques can be used to speed up the computations and decrease numerical errors, see \App{SpeedUp}.

If $B > \tfrac12$, the past-invariant set $\cP_\tau$ intersects both $\cF^1_\tau$ and $\cF^0_\tau$ to form a primary lobe $\cR_\tau$ and a secondary lobe $\tilde \cR_\tau$, even when $\tau = 0$, see \Fig{twoLobes}.  
\InsertFig{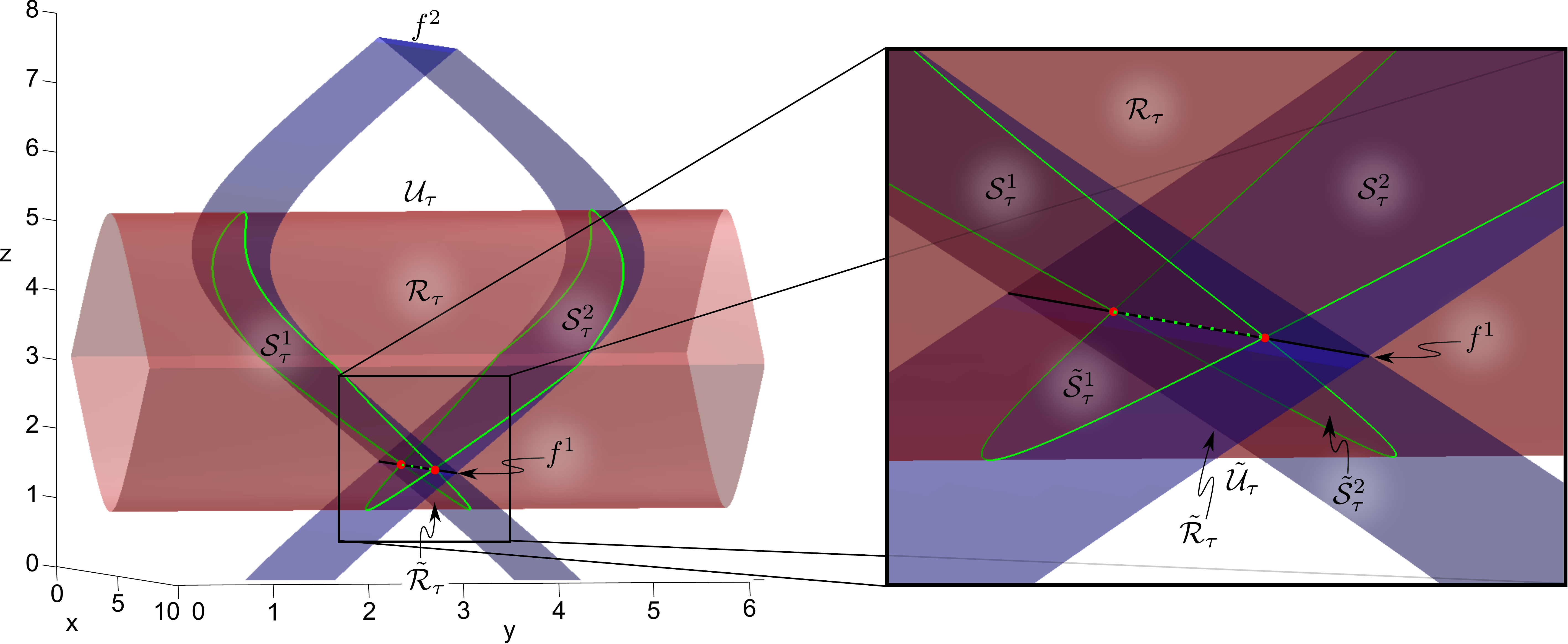}{Lobes at $t=\tau$ for $(A,B,C,\tau) = (1, 0.8, 1.5, 0)$.  The dotted green segment is added to $\partial\cS^{1,2}_\tau$ and $\partial\tilde\cS^{1,2}_\tau$ to ensure that these are closed curves.}{twoLobes}{\linewidth}
A secondary lobe also exists when $B < \tfrac12$, provided $\tau$ is large enough; for example, for the parameters used in \Fig{Intersection2D_1}, $\tilde \cR_\tau$ is formed at $\tau \approx 4.5$.  This lobe can be seen in \Fig{Intersection3D_2} for $\tau = 6$.
\InsertFig{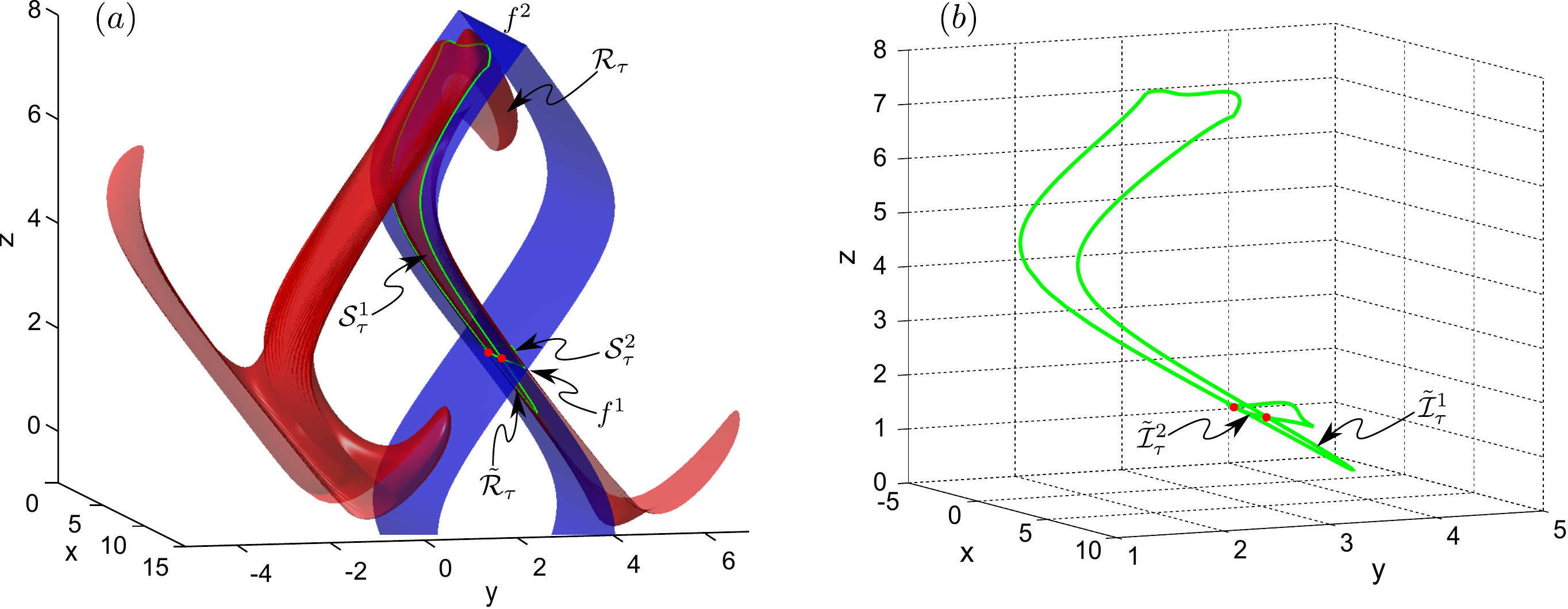}{Intersection of $\cP_\tau$ with $\cF_\tau$ for \Eq{ABCVector} with  $(A,B,C) = (1.0, 0.3, 1.5)$ and $\tau = 6$.  Intersection curves are shown (a) with their corresponding invariant manifolds $\Wu_\tau(p)$ (red) and $\Ws_\tau(f^{0,1,2})$ (blue), and (b) without them.}{Intersection3D_2}{\linewidth}
The total flux $\Phi$ is then simply the sum of the volumes of the two lobes, and computing each is similar to the single-lobe case described above.

\subsection{Results} \label{sec:abcResults}
Using the techniques discussed above, we computed $\Phi$ as a function of the parameters $B$ and $\tau$. The volumes of the primary and secondary lobes, each of which contributes to $\Phi$, were computed independently.  A summary of these lobe volumes as percentages of the volume of the past-invariant set $\cP_0$ is shown in \Fig{lobeVolSummary}.  The volume of $\cP_0$ itself is given by the quadrature\footnote
{Similarly, the volume of $\cF^k_\tau$ is given by the same formula upon replacing $\tfrac B A$ with $\tfrac A C$ and $x$ with $z$.}
\beq{analyticalVolume}
  \Vol(\cP_0) = 4\pi \left[ 2\pi^2 - \int_{\frac{\pi}{2}}^{\frac{5\pi}{2}} \cos^{-1} \big( \tfrac B A(1-\sin x) - 1 \big)\, dx \right].
\eeq
The curves in the figure denote the parameters at which the second lobe $\tilde \cR_\tau$ emerges.  Thus for $B$ values below this curve, the volume of the secondary lobe in \Fig{lobeVolSummary}(b) is zero. Note that for the parameter ranges of the figure, the flux due to the secondary lobe is never more than 3.5\% of the volume of $\cP_0$.  Also, the percent flux of the primary lobe does not strongly depend upon $\tau$; this variation is shown in \Fig{timeDependence} for $B=0.5$.

\InsertFigTwo{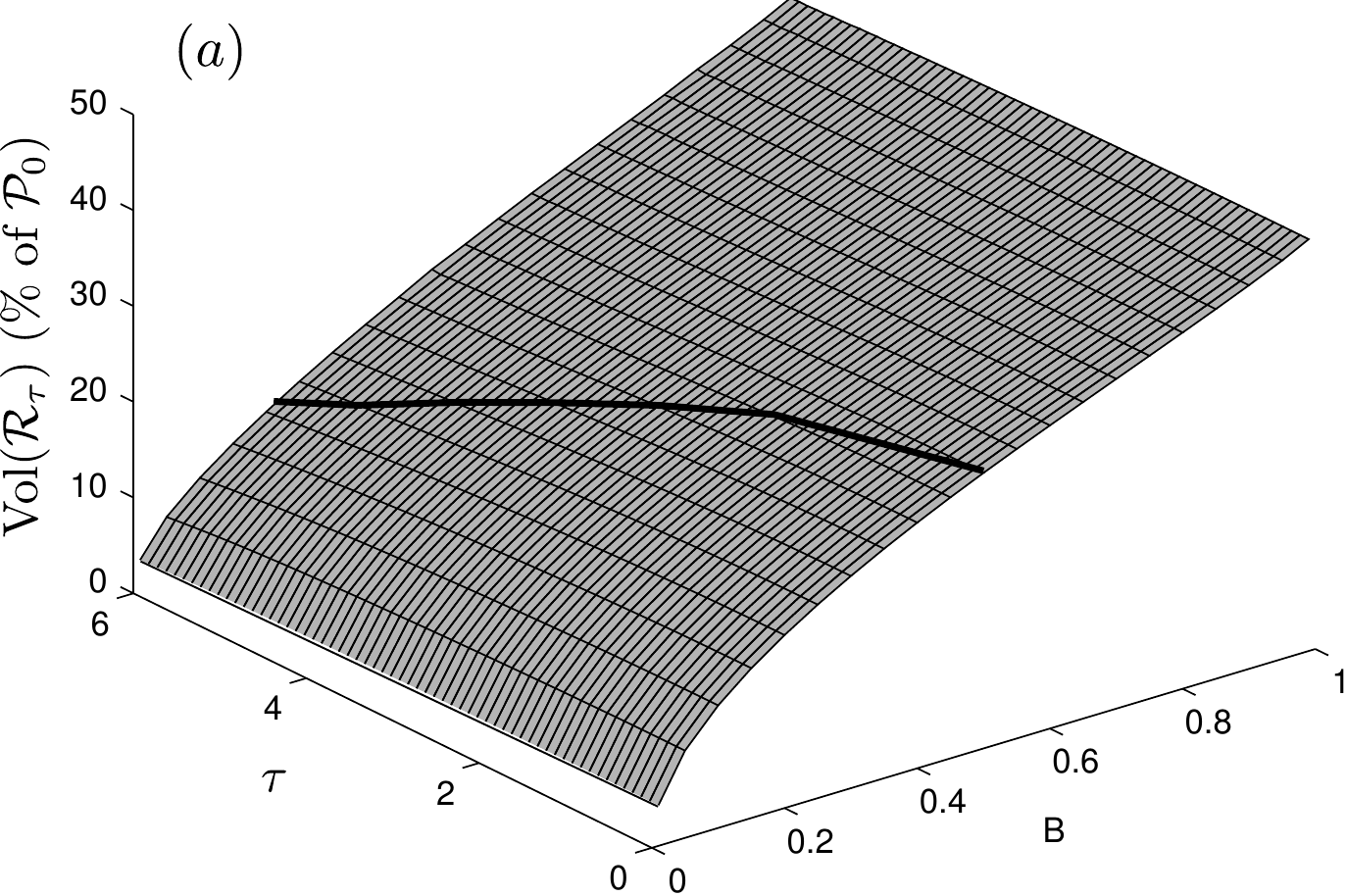}{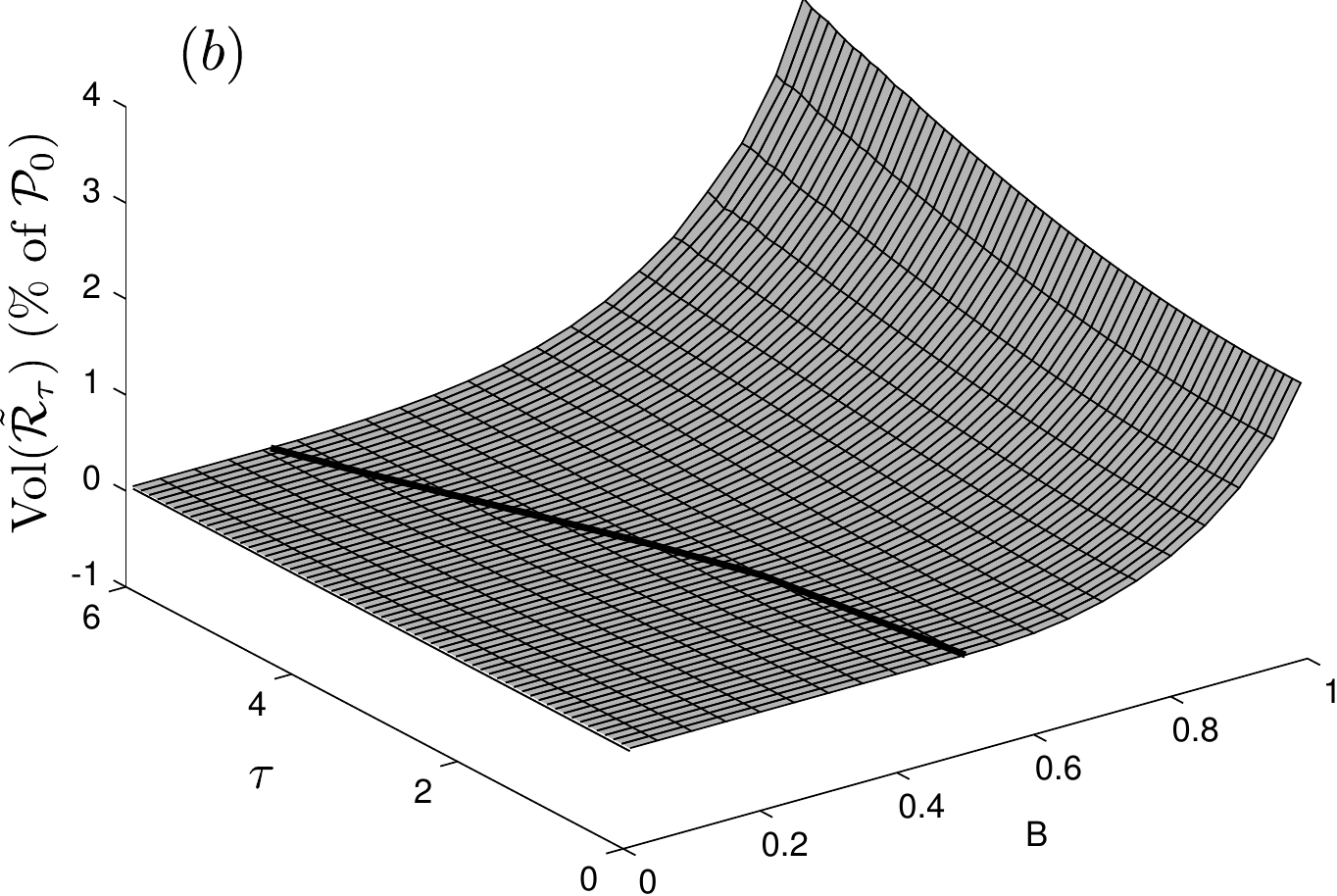}{Ratio of the volumes of the (a) primary and (b) secondary lobes to $\Vol(\cP_0)$.  Here $(A, C) = (1.0, 1.5)$, and $B$ and $\tau$ vary.  The  curves denote emergence of the secondary lobe.}{lobeVolSummary}{.45\linewidth}

\InsertFig{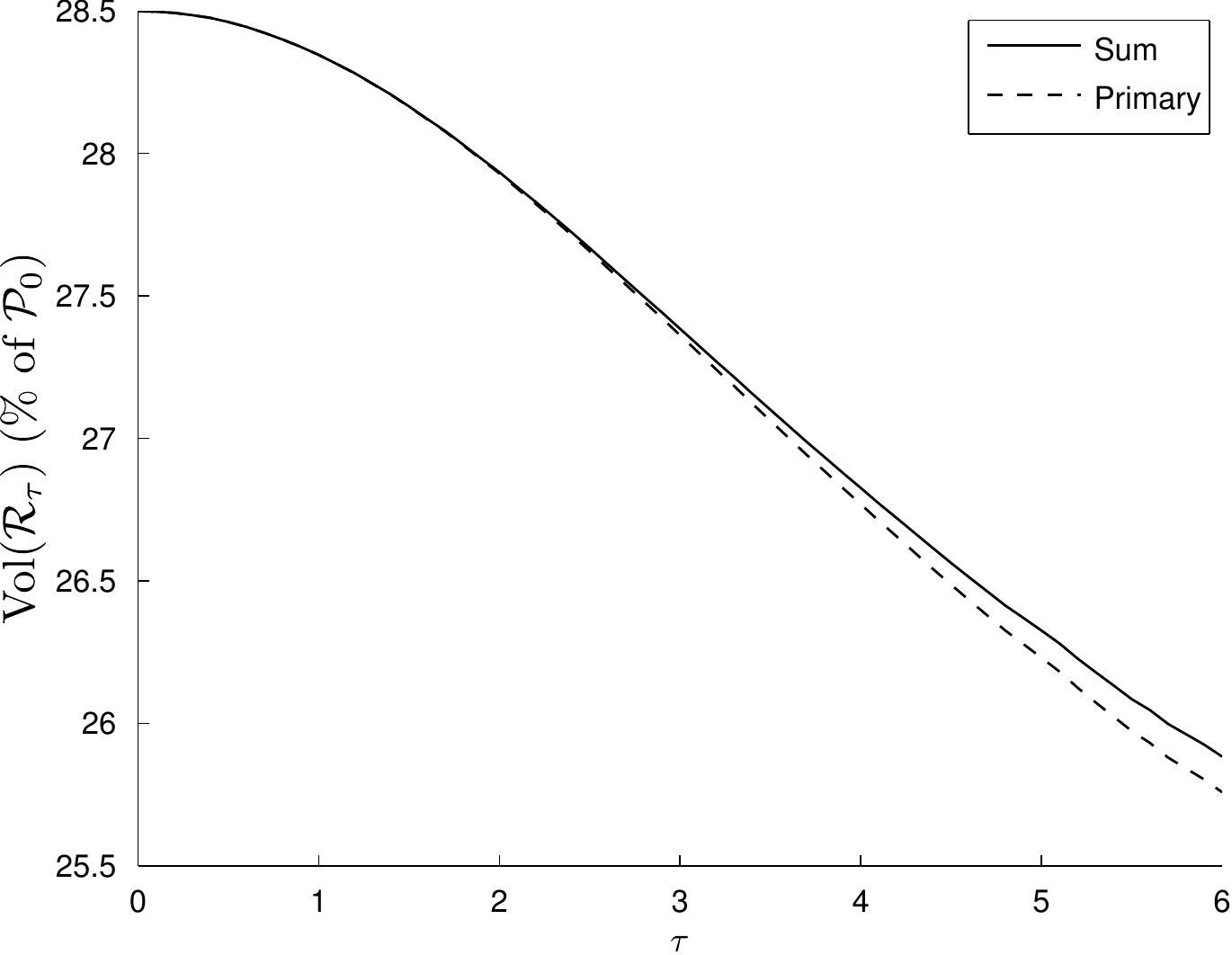}{Dependence of the primary and secondary lobe volumes on transition time $\tau$ for $(A,B,C)=(1,0.5,1.5)$.  The dotted line denotes the primary lobe volume and the solid line denotes the total flux $\Phi$.}{timeDependence}{2.8in}

We performed two checks on the accuracy of the numerical integrations. For $\tau = 0$, the intersection curves $\cI_0 = \cI_\tau$ correspond to the points
\begin{gather*}
     z = \cos^{-1}\big( \tfrac B A( 1- \sin x) - 1 \big), \\
    y = \cos^{-1}\big( \tfrac A C( 1- \sin z) - 1 \big),
\end{gather*}
as $x$ ranges over $[\tfrac{\pi}{2}, \tfrac{5\pi}{2}]$.
Since analytical expressions exist for both the intersection curves and the lobe boundary itself, it is straightforward to numerically compute the lobe volume directly from the 2D integral in \Eq{StokesTheorem}.  These computations revealed that the relative error in the action-flux computations was, on average, on the order of $10^{-5}$.

As a second check, and for nonzero $\tau$, we estimated $\Phi$ using a Monte Carlo simulation.  We uniformly seeded $\cP_0$ with $N$ points, advected each point to $t=\tau$, and determined the number $N_{in}$ of orbits of sample points that lay within the $\cF^k_\tau$ at $t=\tau$.  An estimate for $\Phi$ is then
\[
  \Phi_{MC} = \Vol(\cP_0)  \frac{N_{in}}{N},
\]
using \Eq{analyticalVolume}.  The relative error in any realization of this Monte Carlo computation is estimated by \cite[\S7.6]{Press92}
\beq{MCError} 
  \frac{1}{\sqrt{N}} \sqrt{\frac{N}{N_{in}} - 1}.
\eeq
For $N = 10^6$, the difference between the action-flux computations and the Monte Carlo simulations was typically less than \Eq{MCError}.  For larger values of $\tau$ the difference between the two methods increases; this is likely due to resolution issues with the intersection curves $\cI_\tau$, as neighboring points along these curves begin to separate significantly as $\tau$ becomes large.

Note that a Monte Carlo computation of the flux is only feasible due to the simple nature of $\cP_0$ and $\cF^k_\tau$.  Since the boundaries of these past- and future-invariant sets are known analytically, it is straightforward to both uniformly sample $\cP_0$ and determine which trajectories lie within $\cF^k_\tau$ at $t=\tau$. This could be computationally prohibitive for more complicated past and future-invariant sets. In such cases, using the action-flux formulas to compute $\Phi$ typically requires less Lagrangian information.

\section{Example: Microdroplet Flow} \label{sec:microdroplet}
As a second application of our theory, in this section we study transport between two halves of a droplet moving through a serpentine microfluidic channel mixer. Microfluidic devices have numerous applications, for example to detect specific antigens in the blood \cite{Chin11}, perform
macromolecular or cellular assays and analyze DNA \cite{Beebe02}, and even filter circulating tumor cells from the blood for early stage cancer diagnosis and metastasis detection \cite{Nagrath07, Schattner09}.  They have also been key components of process intensification in industry \cite{Ottino04a, Meijer09}, an effort to decrease processing time, make more efficient use of raw materials, and gain greater functionality 
and sensitivity with respect to device size.  

Many applications of microfluidics require thorough mixing; however, small length scales or high fluid viscosities often force a Stokes flow regime in which mixing by molecular diffusion alone can be impractically slow \cite{Ottino04b}.
Consequently, chaotic advection is required \cite{Aref84, Wiggins04}, and designing
devices in which this occurs is of much interest \cite{Stroock02, Bringer04, Stone05}.  

We model a channel mixer for which the mixing occurs within a droplet formed by the injection of equal volumes of fluids $A$ and $B$, see \Fig{2DChannel}(a). For simplicity we assume that the droplet is a sphere and that each fluid occupies one hemisphere at $t=0$. The plane separating $A$ and $B$ at $t=0$ is the \emph{injection plane}, denoted by $\cU_0$. Subsequent motion of the droplet through the serpentine microchannel, 
as sketched in \Fig{2DChannel}(b), will induce a time-dependent vector field $V(\bx,t)$ within the droplet, with the goal of 
stretching and folding the initial interface to enhance the mixing by molecular diffusion.  Finally, after a transition time $\tau$, the droplet is extracted by dividing it into two hemispheres along an \emph{extraction plane}, $\cS_\tau$.  

\InsertFig{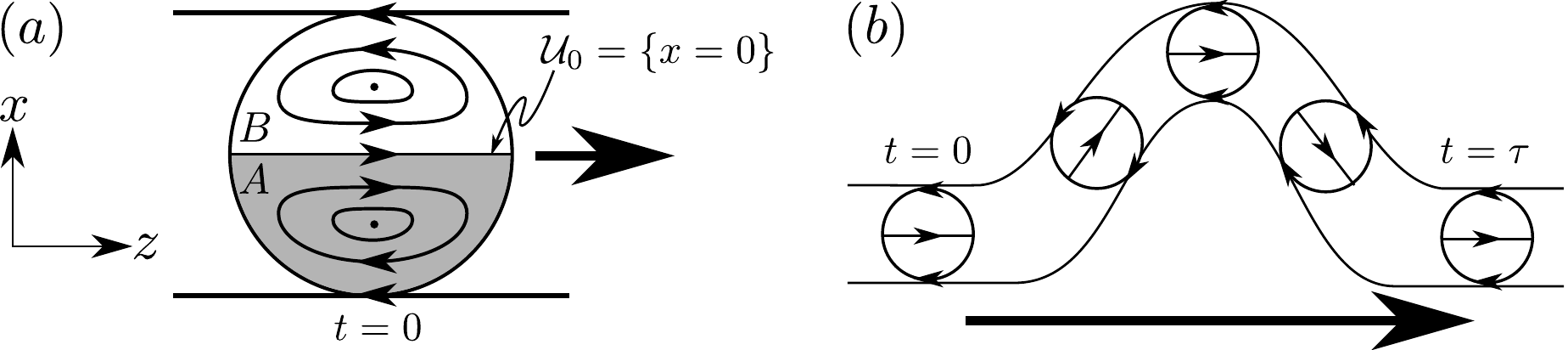}{(a) Streamlines in the $xz$-plane (comoving frame) and initial locations of the fluids $A$ and $B$ and the injection plane $\cU_0$ that separates them.  (b) Streamline dependence on channel shape.}{2DChannel}{.85\linewidth}

We assume $\cU_0$ is invariant under $P(\bx) = V(\bx,0)$ so that the hemispheres $A$ and $B$ are each past invariant.  Similarly, we assume $\cS_\tau$ is invariant under $F(\bx) = V(\bx,\tau)$. Extending the dynamics to $t \in \bR$ in this way gives a flow that is transitory in the sense of \Eq{transitoryODE}.  Our goal is to compute the fraction of $A$ in each of the extracted hemispheres as a function of the microchannel shape.

For simplicity we assume that the droplet remains spherical as it moves through the serpentine channel. We also assume that it is always in contact with the channel walls, so that---in the lab frame---the velocity at these contact points must be zero.  For a straight channel, the resulting steady flow is axisymmetric with a vortical recirculation, such as that sketched in \Fig{contours}(a), and this is confirmed experimentally \cite{Sarrazin06} for a ``bullet-shaped'' droplet in a straight channel with rectangular cross-section. It is reasonable to think that these results would extend to a droplet that has sufficient surface tension to maintain a spherical shape, as well as to a more symmetric, circular channel.  Finally we assume that the center of mass velocity of the droplet remains parallel to the channel walls so that, as the channel bends, the droplet's velocity field rotates to keep its axis of symmetry parallel to the walls, as sketched in Figs.~\ref{fig:2DChannel}(b) for 2D, and \ref{fig:contours}(b) for 3D.

\InsertFig{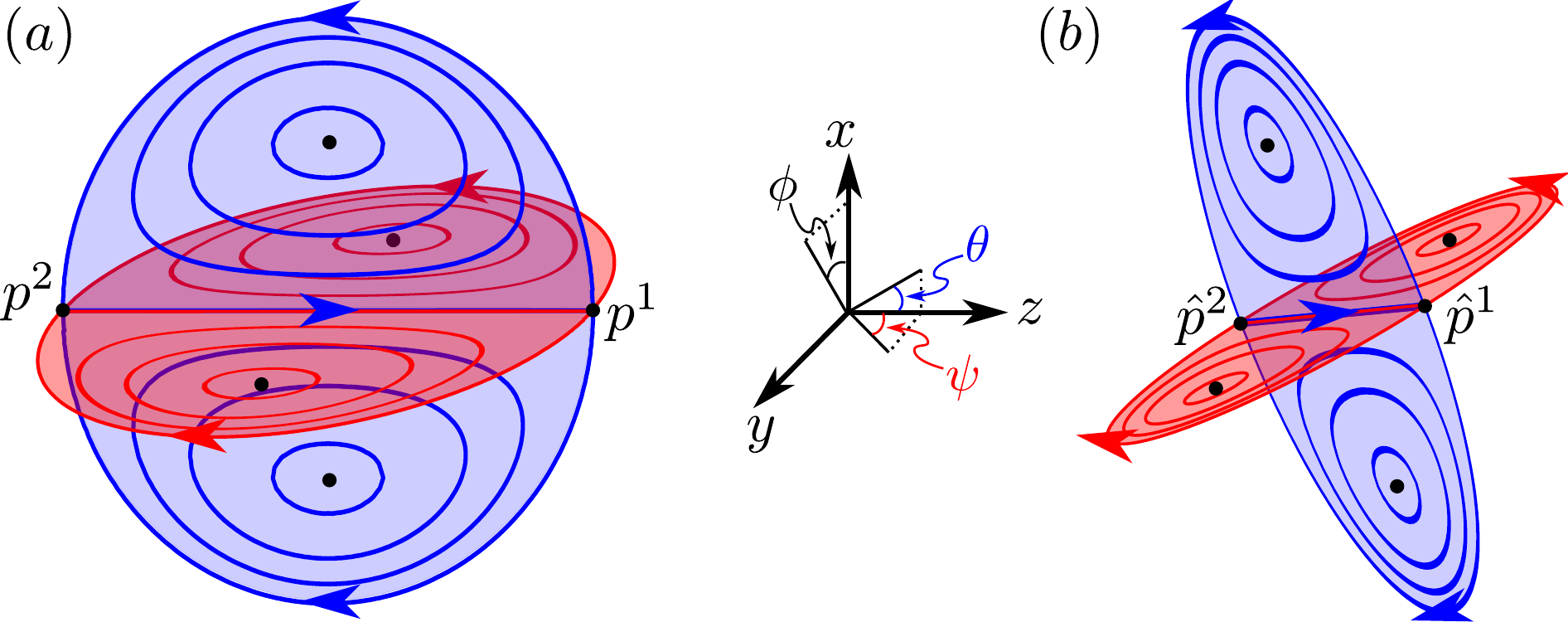}{(a) Streamlines for the droplet vector field $V_0$ in the $xz$- (blue) and $yz$- (red) planes.  (b) Corresponding contours of the rotated vector field with angles  $\theta,\, \psi$ and $\phi$ as defined by \Eq{singleRotations}.}{contours}{.75\linewidth}

\subsection{Stationary Model} \label{sec:stationaryDroplet}
The steady, low-Reynolds-number flow inside a spherical droplet subject to a uniform external creeping flow $\mathbf{U} = -U\hat z$ (the Hadamard-Rybczynski problem) is known analytically \cite{Hadamard11}.  Following \cite{Stone05}, we take this flow to model the steady motion within a droplet moving through a straight section of microchannel.  To nondimensionalize, we normalize lengths by the droplet radius, $D$, velocities by $U$, and time by $4D(1+\mu)/U$, where $\mu$ is the ratio of the viscosity of the fluid within the droplet to that of the exterior fluid. The nondimensional vector field within the droplet in a comoving reference frame is given by
\[
 V_0(\bx) = \left( \begin{matrix}
   2xz \\
   2yz \\
   2(1 - 2x^2 - 2y^2 - z^2)
 \end{matrix} \right).
\]
In this frame, the droplet sits at the origin and its boundary ($r^2 = |\bx|^2 = 1$) is invariant under $V_0$.  Certainly the length scales in typical microfluidic devices imply that the low Reynolds number assumption used to obtain this solution is appropriate. In addition, if the surface tension at the droplet boundary is sufficiently large, the assumption that the droplet remains spherical seems reasonable.

The vector field $V_0$ has two hyperbolic equilibria (stagnation points) $p^1 = (0,0,1)$ and $p^2 = (0,0,-1)$, both on the droplet boundary, recall \Fig{contours}(a).  The corresponding two-dimensional manifolds $\Wu(p^1)$ and $\Ws(p^2)$ form a heteroclinic connection between $p^1$ and $p^2$ that comprises the droplet boundary.  Similarly, the one-dimensional manifolds $\Ws(p^1)$ and $\Wu(p^2)$ form a second heteroclinic connection between these hyperbolic points in the interior along the axis of symmetry.  There is also a ring of elliptic equilibria at $z=0$ with $x^2 + y^2 = 1/2$.

The vector field $V_0$ can be viewed as a sum of two, 2D Hamiltonian vector fields with $(x,z)$ and $(y,z)$ as canonical variables, and Hamiltonians
\beq{2DHam}
 \begin{split}
  H(\bx) &= (1 - x^2 - y^2 - z^2)x, \\
  K(\bx) &= (1 - x^2 - y^2 - z^2)y,
 \end{split}
\eeq
respectively.  Then $V_0$ is equivalent to
\beq{HillsVortexHam}
 V_0(\bx) = \left( \begin{array}{c} \smallskip
                      -\frac{\partial H}{\partial z} \\ \smallskip
                      -\frac{\partial K}{\partial z} \\
                      \frac{\partial H}{\partial x} +\frac{\partial K}{\partial y}
                     \end{array} \right).
\eeq
With this formulation, it is straightforward to show that \Eq{HillsVortexHam} is globally Liouville in the sense of \Def{Liouville}, with 
\beq{dropletLambda0}
 \begin{split}
  \beta_0 &= H\,dy - K\,dx, \\
  \lambda_0 &= (z\dot x + H)\,dy - (z\dot y + K)\,dx.
 \end{split}
\eeq
The vector field $V_0$ contains no swirl: each plane containing the $z$-axis is invariant.  Axisymmetry implies that the dynamics in each such plane is equivalent to that in the $y=0$ plane, for example, which is Hamiltonian with $H(x,0,z)$. Consequently the dynamics of $V_0$ is completely integrable.

\subsection{Transitory System}

In our model, as the droplet moves in the serpentine channel, e.g., \Fig{pipeshape}, transitory time dependence is introduced when the vector field \Eq{HillsVortexHam} rotates to maintain its axis of symmetry parallel to the channel walls. 

\InsertFig{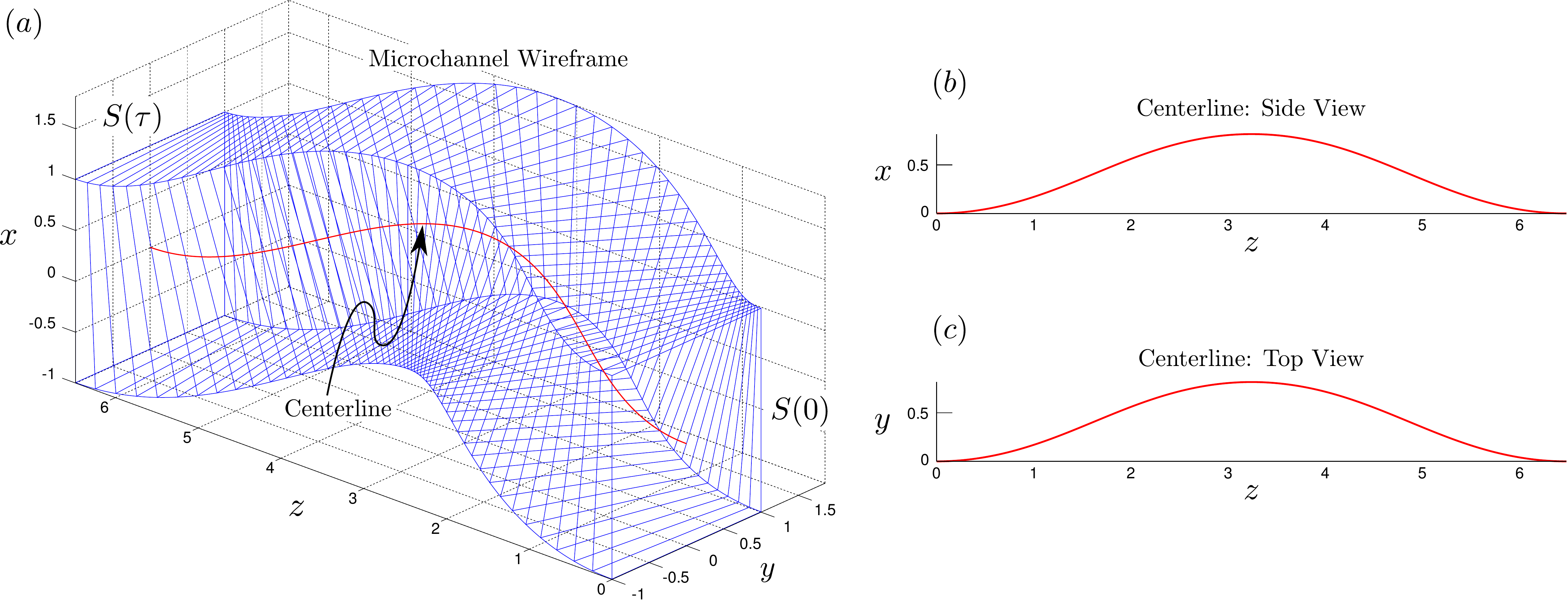}{(a) Wireframe of the microchannel for angles given by \Eq{angles} with $\tau = 3.5$ and $\xi=\pi/8$.  The red centerline is equidistant from the four corners of each cross-section and everywhere tangent to the bulk velocity of the droplet. (b)-(c) Projections of the red centerline onto the $xz$- and $yz$-planes.}{pipeshape}{\linewidth}

We will assume that the fluid $A$ initially occupies one of the hemispheres bounded by the injection plane $\cU_0$. Without loss of generality, we will choose $\cU_0 = \{(0,y,z)\}$,\footnote
{Note that here $\cU_0$ is \emph{not} the unstable manifold of any past-hyperbolic orbit; indeed, its surface area does not vanish in either direction of time.  We nevertheless use a notation consistent with that of the previous sections, i.e. $\cU_0$ is a past \emph{invariant surface} rather than the past \emph{unstable manifold}.  Similar notation is used for the future-invariant surface $\cS_\tau$.}
and suppose that $A$ occupies the ``negative'' hemisphere ($x<0$), denoted by $\cP_0$. Note that $\cP_0$ is past invariant under $V$, since $V_t = V_0$ for $t<0$ by assumption. 
Our goal is to compute the flux $\Phi$ of $A$ from $\cP_0$ to one of the hemispheres bounded by the extraction plane $\cS_\tau$. We will investigate two possible choices,\beq{extractionPlaneChoices}
  \cS_\tau = \{(0,y,z)\} \:\: \textrm{or} \:\: \cS_\tau = \{(x,0,z)\}.
\eeq
Let $\cF_\tau$ denote the ``positive" extraction hemisphere; i.e., $x >0$ or $y>0$, for the cases above, respectively. Note that the hemisphere $\cF_\tau$ is future invariant.
Since volume is preserved, the flux we compute is  equal to that of the fluid $B$ to the complement of $\cF_\tau$. The values of the fluxes of $A$ to the complement of $\cF_\tau$ and $B$ to $\cF_\tau$ follow by volume preservation.

The flux, of course, depends heavily on the choice \Eq{extractionPlaneChoices} of extraction plane.  For example, if $\tau=0$ and  $\cS_\tau = \cU_0$, $\cF_\tau$ will not contain any of fluid $A$: $\Phi= 0$.  However, if the angle between $\cS_\tau$ and $\cU_0$ is $\tfrac\pi2$, as for our second choice, $A$ and $B$ will be equally represented in each extracted hemisphere when $\tau =0$, so $\Phi = \pi/3$, one-quarter of the droplet volume.

To model the shape of the serpentine channel, we will specify the rotations that give its axis and orientation at each time $t$. Let $\theta(t)$ and $\psi(t)$ be the angles between the channel axis and its projection onto the lab-fixed $xz$- and $yz$-planes, see \Fig{contours}. The angle $\phi(t)$ will represent an additional torsion about the axis of symmetry. These ``Tait-Bryan angles"  correspond to the matrices
\beq{singleRotations} \small 
 R_y(\theta) = \left[ \begin{matrix}
                     \cos \theta & 0 & \sin \theta \\
                     0 & 1 & 0 \\
                     -\sin \theta & 0 & \cos \theta
                   \end{matrix} \right],
                   \:
  R_x(\psi) = \left[ \begin{matrix}
                    1 & 0 & 0 \\
                    0 & \cos \psi & \sin \psi \\
                    0 & -\sin \psi & \cos \psi
                   \end{matrix} \right],
		   \:
  R_z(\phi) = \left[ \begin{matrix}
                    \cos \phi & -\sin \phi & 0 \\
                    \sin \phi & \cos \phi & 0 \\
                    0 & 0 & 1
                   \end{matrix} \right],
\eeq
and give an overall rotation
\beq{rotation}
 R(t) = R_y(\theta(t))R_x(\psi(t))R_z(\phi(t)).
\eeq
These are not the typical ``right-handed'' rotations in $\bR^3$, but are designed to give the horizontal, vertical, and torsional angles along the microchannel. In the frame of reference of the center of mass of the  droplet, the time-dependent vector field is then the pushforward, \Eq{pushFowardV}, of $V_0$ by $R(t)$,
\beq{droplet}
 V(\bx,t) = R_*(t)V_0(\bx).
\eeq
Since the channel is initially aligned with the $z$-axis, $\theta(0) = \psi(0) = \phi(0) = 0$ , and since the process only occurs for $t \in[0,\tau]$, we can formally extend the vector field to $t \in \bR$ by setting 
\[
\dot \theta(t) = \dot\psi(t) = \dot\phi(t) = 0 \quad \textrm{for} \quad t \notin [0,\tau],
\]
so that \Eq{droplet} is transitory.  In similar fashion to \Sec{stationaryDroplet}, denote the hyperbolic equilibria at the leading end of the droplet by $p^1$ and $f^1$ and those at the trailing end of the droplet by $p^2$ and $f^2$, under the past and future vector fields, respectively.  Since the droplet boundary at $r=1$ remains invariant under \Eq{droplet}, the phase space $M$ is simply the closed ball of radius 1, centered at the origin.

Since the droplet is assumed to contact the channel walls, no-slip boundary conditions and \Eq{HillsVortexHam} imply that, in the lab frame, the droplet moves with a nondimensional speed of two. Since its velocity is aligned with the channel axis, the position of the droplet center, $\bc(t)$, satisfies the initial value problem
\beq{centerline}
  \dot{\bc}(t) = 2R(t) \hat z \;, \quad \bc(0) = \bf{0} .
\eeq
The function $\bc(t)$ also defines the channel center as a function of the nondimensional time $t$; moreover, since the dimensional arclength along the channel is simply $s = 4D(1+\mu)t$, $\bc(t)$ also defines the channel center as a space curve, recall \Fig{pipeshape}.

Note that from \Eq{rotation}, the torsion $\phi$ drops out of \Eq{centerline}, as it should. Torsion, however, does affect the channel shape. Indeed, assuming that the channel maintains its shape in the direction perpendicular to the current channel axis, $\dot \bc(t)$, then a point $\bw(0)$ on the channel wall at the injection point evolves to $\bw(t) = \bc(t) + R(t) \bw(0)$. 
In order that this model correspond to a physically realizable channel, the walls must correspond to an embedded submanifold---there can be no self-intersections. For a circular cross-section, self-intersections will occur if the local radius of curvature of any bend in the channel is less than the cross-sectional radius.  For rotations \Eq{rotation}, a circular channel with a nondimensional radius of one will be physically realizable when
\beq{curvatureRadius}
  \dot \theta^2(t) \cos^2 \psi(t) + \dot \psi^2(t) < 4 .
\eeq
For example, it is sufficient that both $|\dot\theta|$ and $|\dot\psi| < 2$. 
If the cross-section is not circular, it is more difficult to obtain an analytical requirement.

For the computations below, we will use the shape functions
\beq{angles}
 \theta(t) = \psi(t) =  \xi \sin( {2\pi t/\tau}) \;, \quad \phi(t) = 0,
\eeq
for $t \in [0,\tau]$, recall \Fig{pipeshape}, and fix $\theta(t) = \psi(t) = \phi(t) = 0$ outside the transition interval. Since these angles vanish at $t=0$ and $t=\tau$, both $P$ and $F$ are equal to $V_0$ in \Eq{HillsVortexHam}. 
For this model, there is a critical transition time $\tau^*$ for each amplitude $\xi$, below which the channel walls would self-intersect. For a circular channel this is easily obtained from \Eq{curvatureRadius}:
\[
  	\tau^*_{circ}(\xi) = \sqrt{2}\pi \xi .
\]

When the channel has a square cross-section, as shown in \Fig{pipeshape}, a slightly larger $\tau^*$ is required because the self-intersection first occurs at the corners; a numerical solution for two cases gives 
\[
	\tau^*_{sq} = \left\{\begin{array}{ll}
				 2.4675, &  \xi = \tfrac\pi8 \\
				 4.9348, &  \xi = \tfrac\pi4
					\end{array}\right. .
\]
In terms of dimensional variables, the physical channel length is $L = U\tau$, and so, for a given viscosity ratio, these requirements imply a lower bound on on the ``inverse aspect ratio" of the channel,
\[
	\frac{L}{D}  >  {4(1+\mu)\tau^*} .
\]
Even when $\tau < \tau^*$, the model \Eq{droplet} still corresponds to a droplet that is forced to rotate; however, to realize these rotations in a lab, some other mechanism, such as electromagnetic manipulation of a charged microdroplet \cite{Lyuksyutov04, Shi10} would have to be used.

We will use the model \Eq{angles} to investigate the effects of the transition time  $\tau$ and amplitude $\xi$ on the transported flux. As we noted above, given a fixed diameter and viscosity ratio, the transition time $\tau$ is a proxy for the channel length. The oscillation amplitude $\xi$ controls the magnitude of the bends in the channel.

\subsection{Computation}\label{sec:DropletComputations}
To compute $\Phi$, we employ the action-flux results of \Sec{lobeVolumes}.  For any $\tau > 0$, there exist well-defined lobes containing all the fluid $A$ in $\cF_\tau$; these lobes are bounded at any time by slices of the orbits of the injection and extraction planes, $\cU_t$ and $\cS_t$, and the invariant droplet boundary $\partial M$ (see \Fig{dropletLobes}).
Since $\cU_0$ and $\cS_\tau$ are not hyperbolic manifolds, the intersections of their orbits in any time-$t$ slice are not heteroclinic, as was the case in \Sec{abc}; however these intersection curves are still needed for the computation of the lobe volumes according to the action-flux formulas.  They are \emph{interior} to the droplet (except at possibly two points on $\partial M$) and we denote them at time $\tau$ by
\[
  \cI^\textrm{int}_\tau  = T(\cU_0) \cap \cS_\tau.
\]
where $T$ is again the transition map \Eq{transitionMap}.
The intersections of $T(\cU_0)$ and $\cS_\tau$ with $\partial M$ itself are also important.  They lie on the invariant spherical \emph{boundary} and are denoted by
\[
  \cI^\partial_\tau = T(\partial\cU_0) \cup \partial\cS_\tau.
\]

\InsertFig{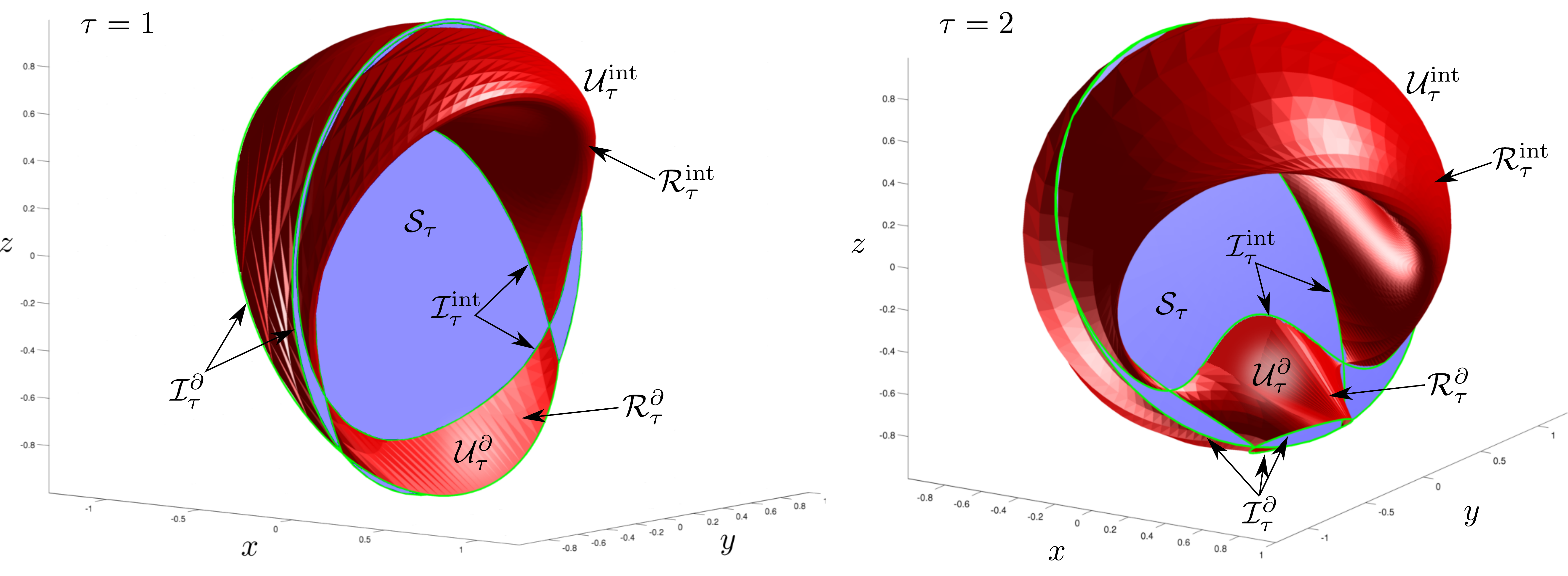}{Lobes contributing to the flux $\Phi$ at $t=\tau$ for $\cS_\tau = \{x=0\}$ and $\xi = \pi/4$. The intersection curves $\cI$ are shown in green and the spherical boundary $\partial M$ is not shown.}{dropletLobes}{\linewidth}

We use a method similar to that in \Sec{abc} to identify the intersection curves.  The injection plane $\cU_0$ has a natural parameterization $G: \cD_1({\bf 0}) \to M$, given by
\[
 G(u,v) = (0,u,v).
\]
Letting $\cW:M \to \bR$ so that the extraction plane $\cS_\tau$ is its zero level set, the interior intersection curves in parameter space,
\beq{rootFindingProblem}
 \cI^\textrm{int}_G = \{ (u,v) \in \cD_1({\bf 0}) \:\: \big| \:\: \cW( T( G(u,v) ) ) = 0 \},
\eeq
are computed using a 2D root finder. Figure \ref{fig:intersections} shows these curves for $\xi=\pi/4$, various values of $\tau$, and both choices for the extraction plane \Eq{extractionPlaneChoices}.
The corresponding curves in phase space at $t=\tau$ are
\[
 \cI^\textrm{int}_\tau = T( G( \cI^\textrm{int}_G ) ).
\]

\begin{figure}[ht]
       \centerline{
         \includegraphics[width=.19\linewidth]{./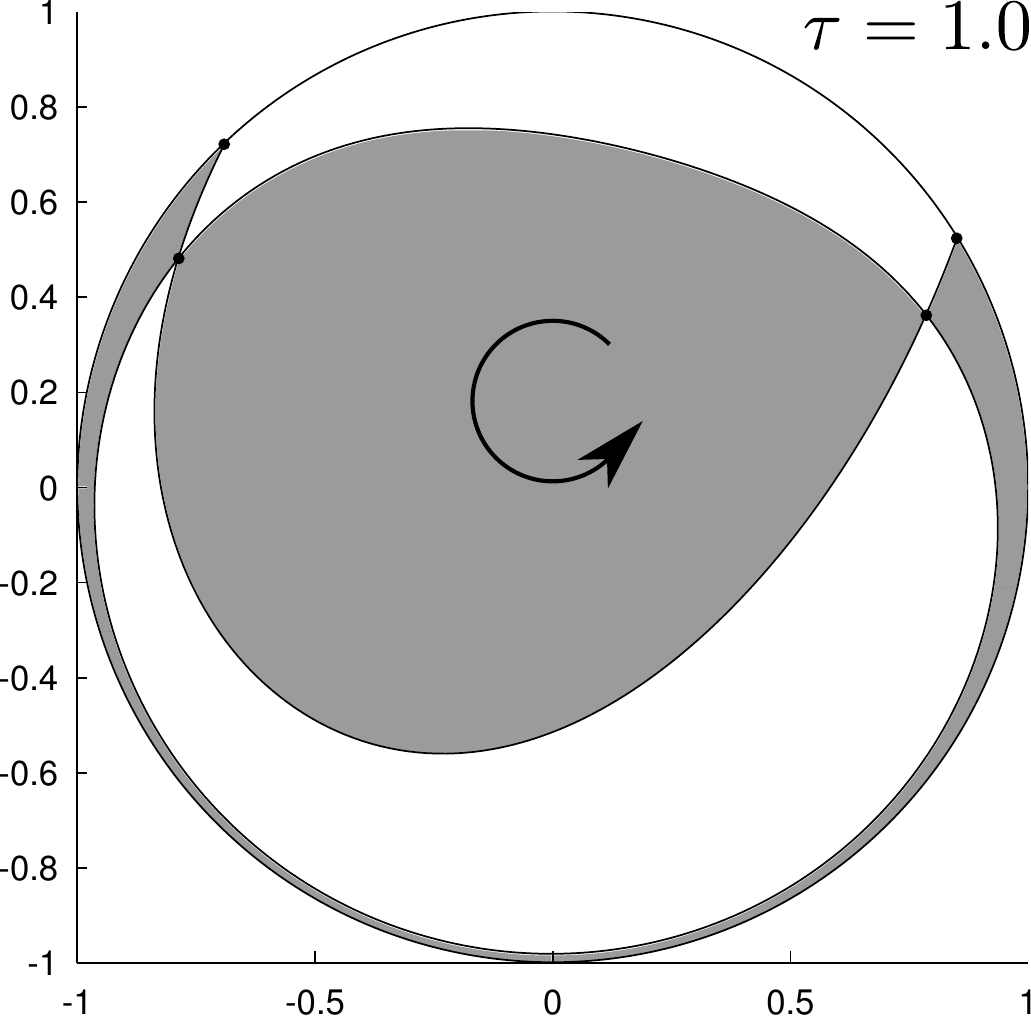}
         \includegraphics[width=.19\linewidth]{./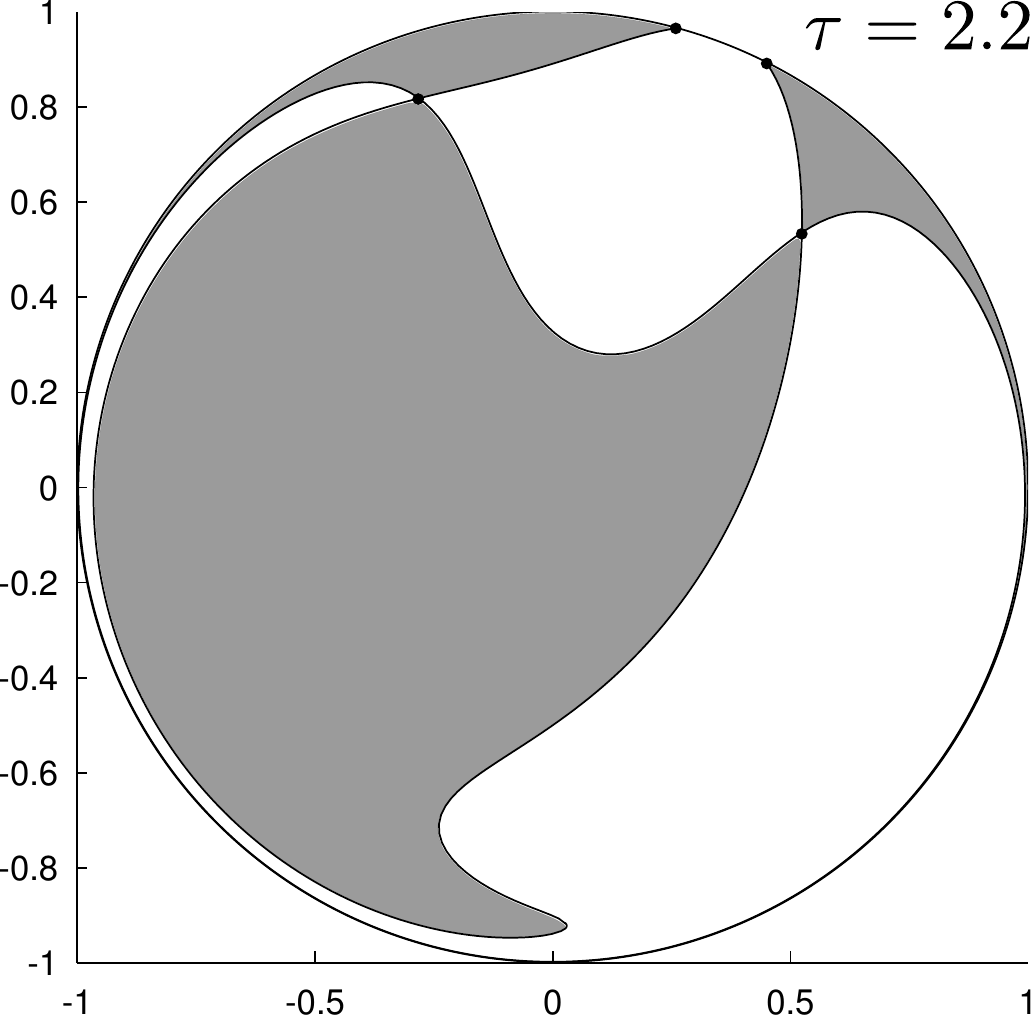}
         \includegraphics[width=.19\linewidth]{./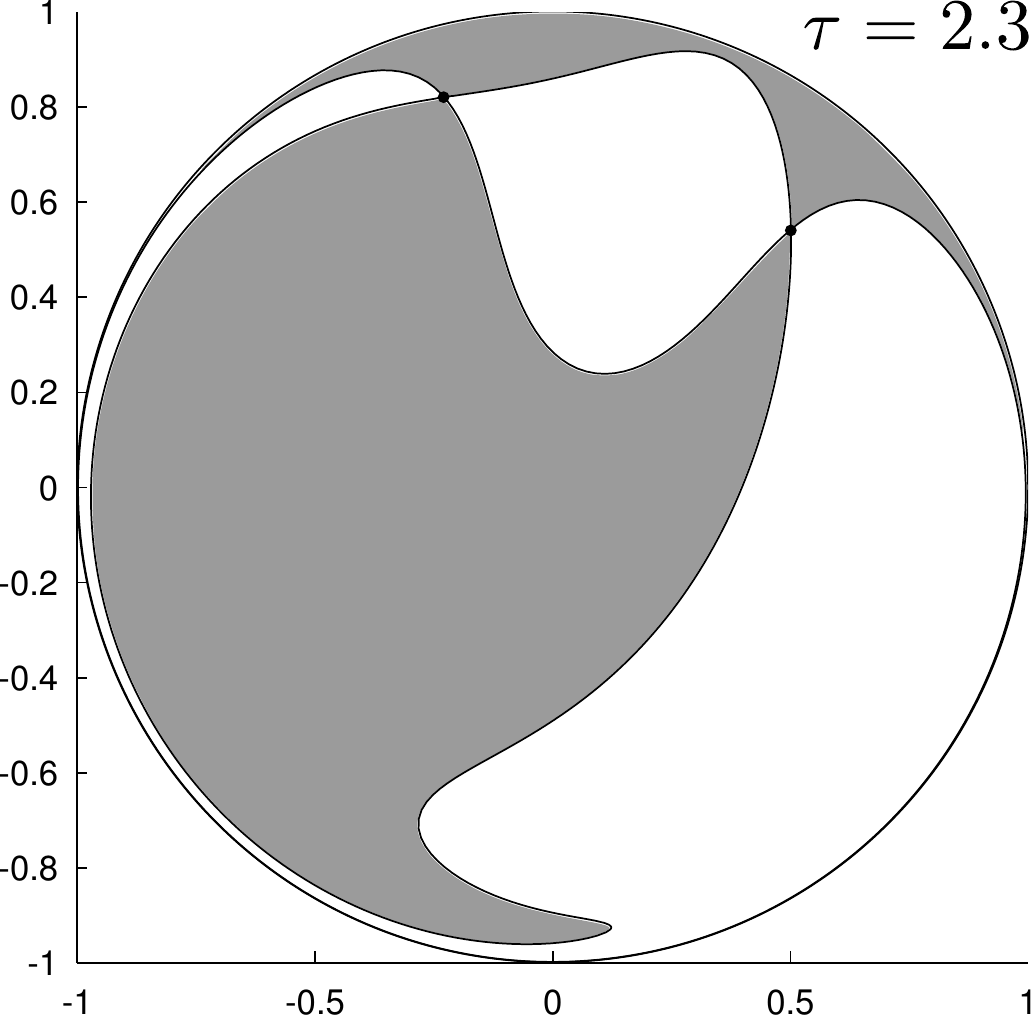}
         \includegraphics[width=.19\linewidth]{./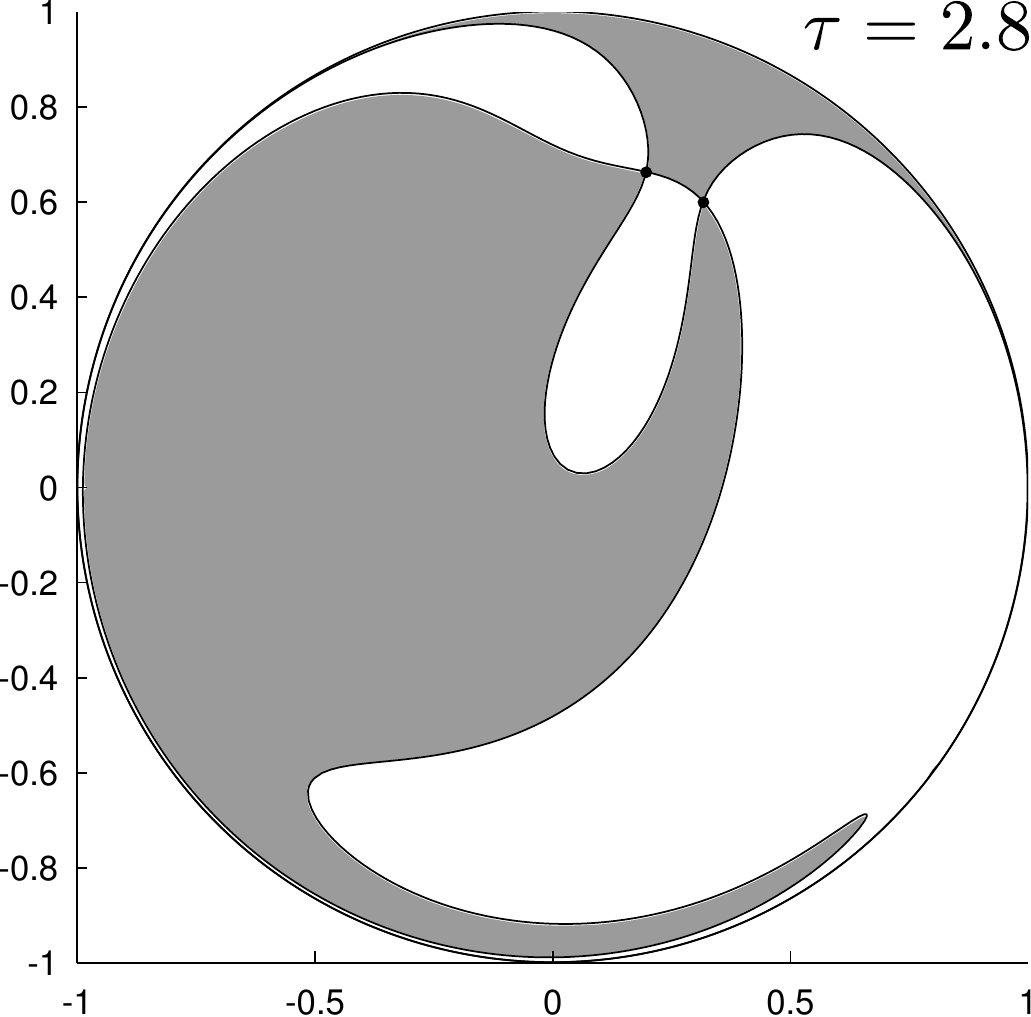}
         \includegraphics[width=.19\linewidth]{./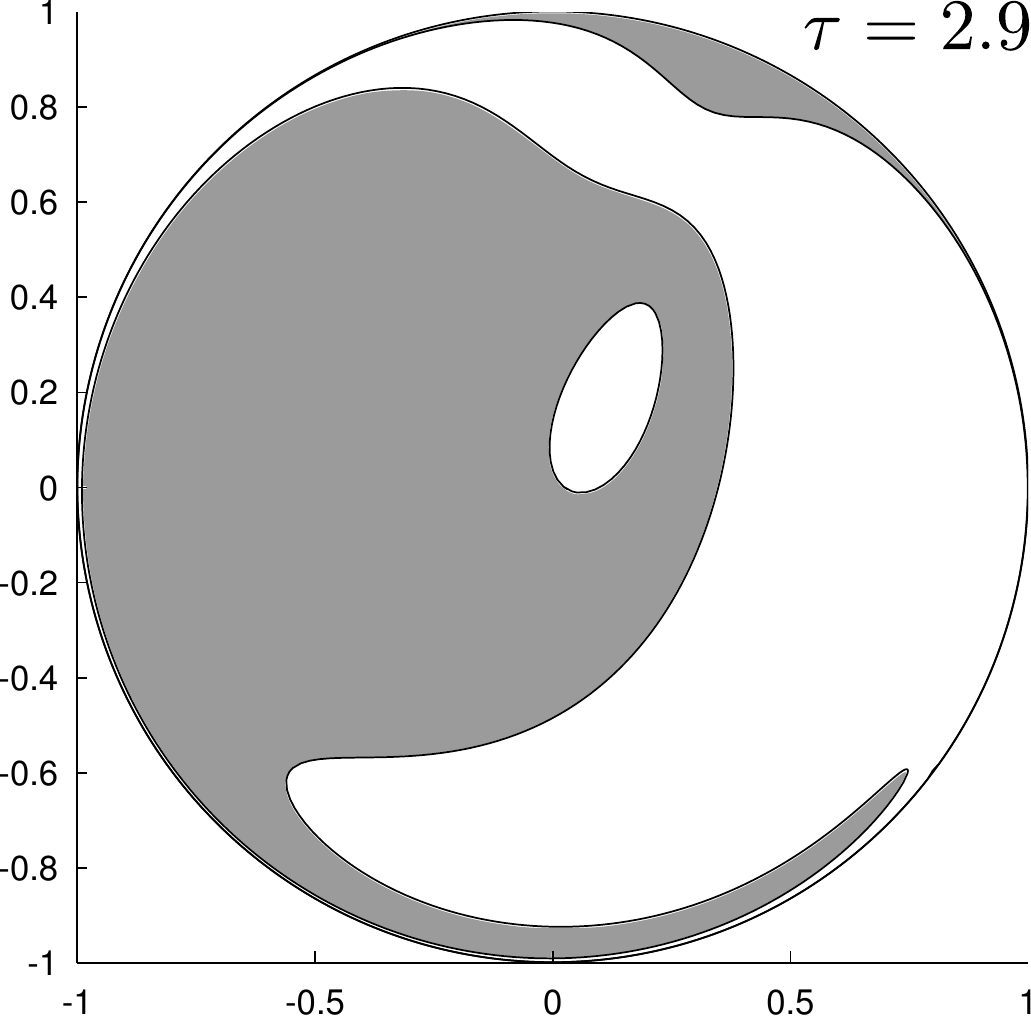}
       }
       \medskip
       \centerline{
	 \includegraphics[width=.19\linewidth]{./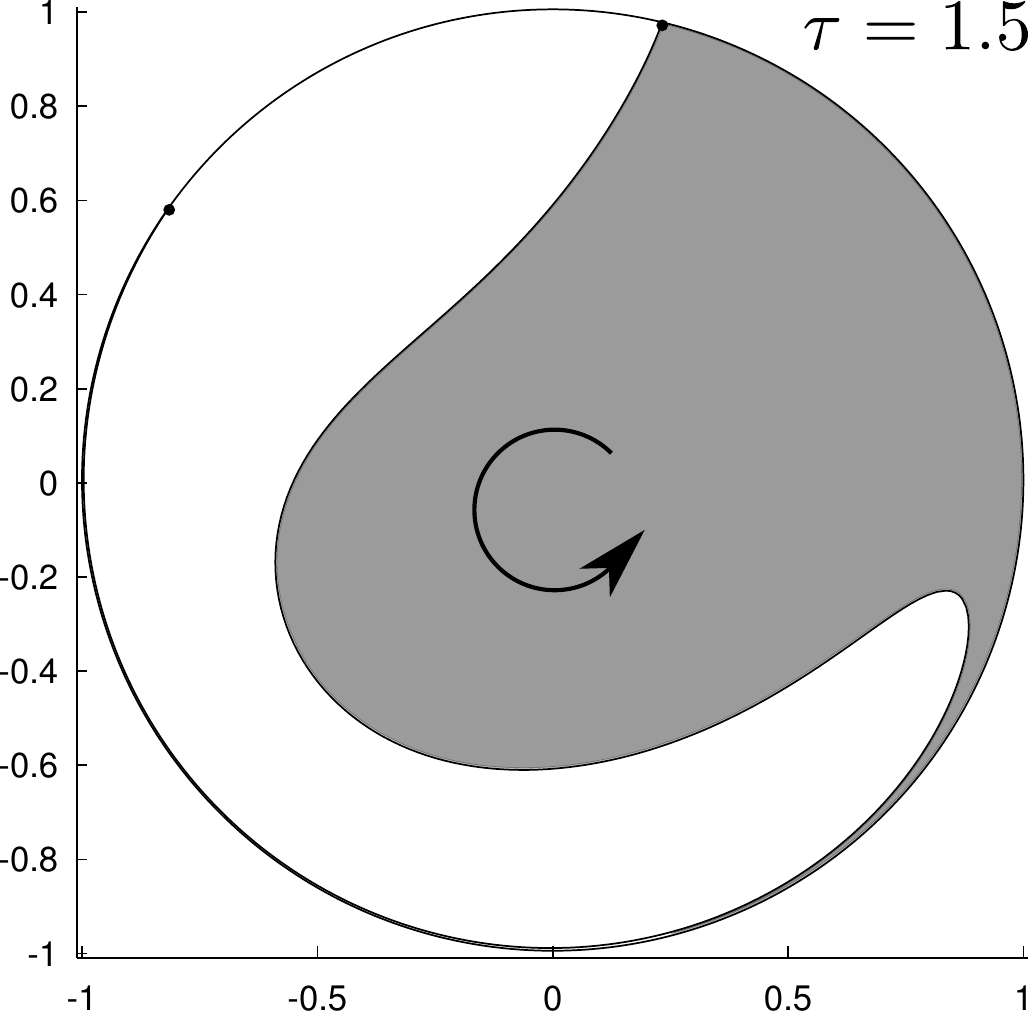}
         \includegraphics[width=.19\linewidth]{./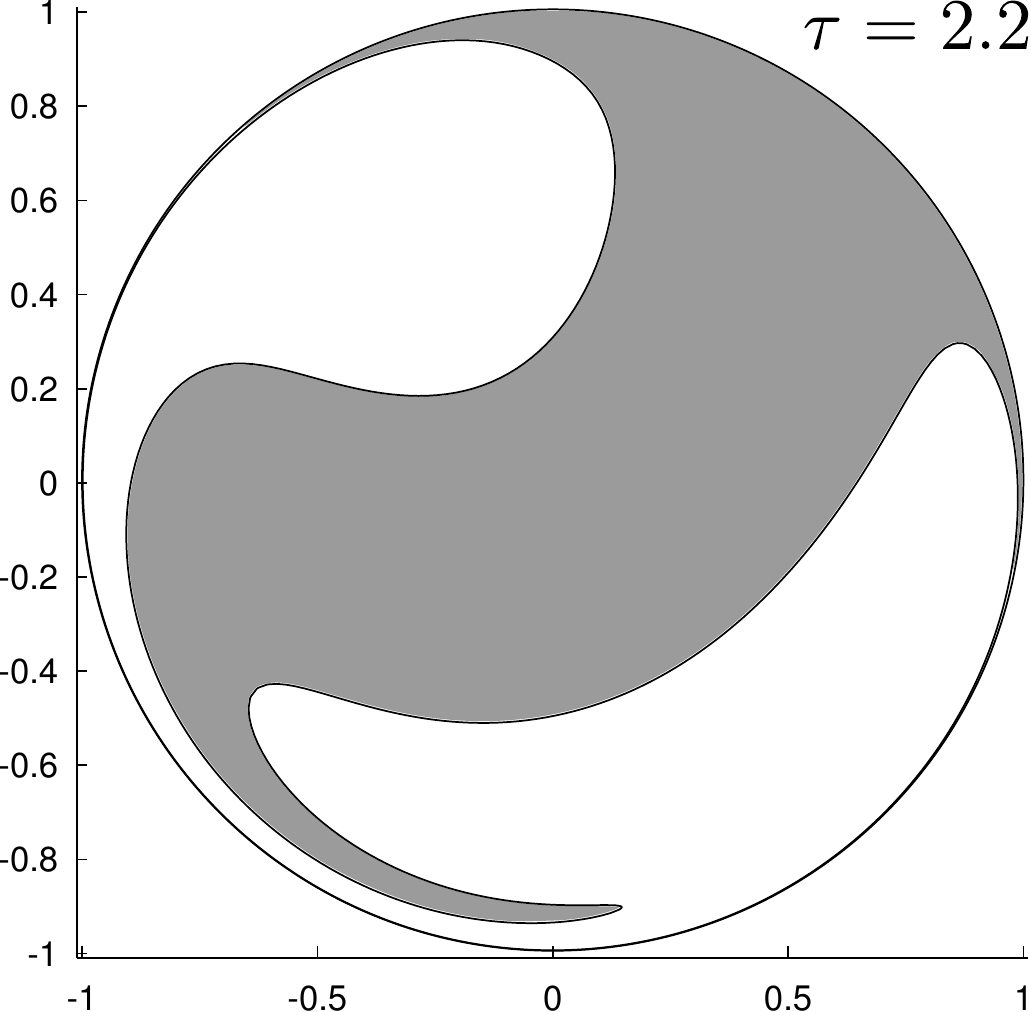}
         \includegraphics[width=.19\linewidth]{./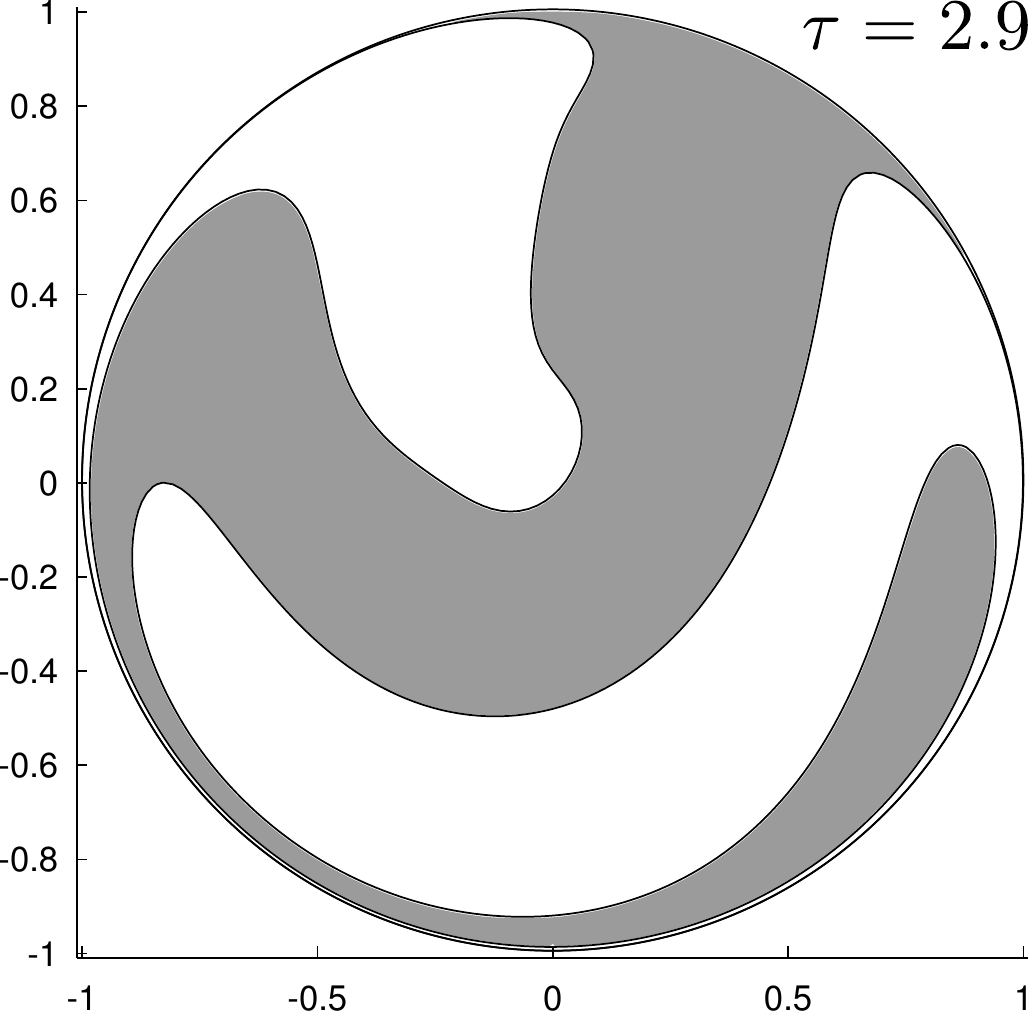}
         \includegraphics[width=.19\linewidth]{./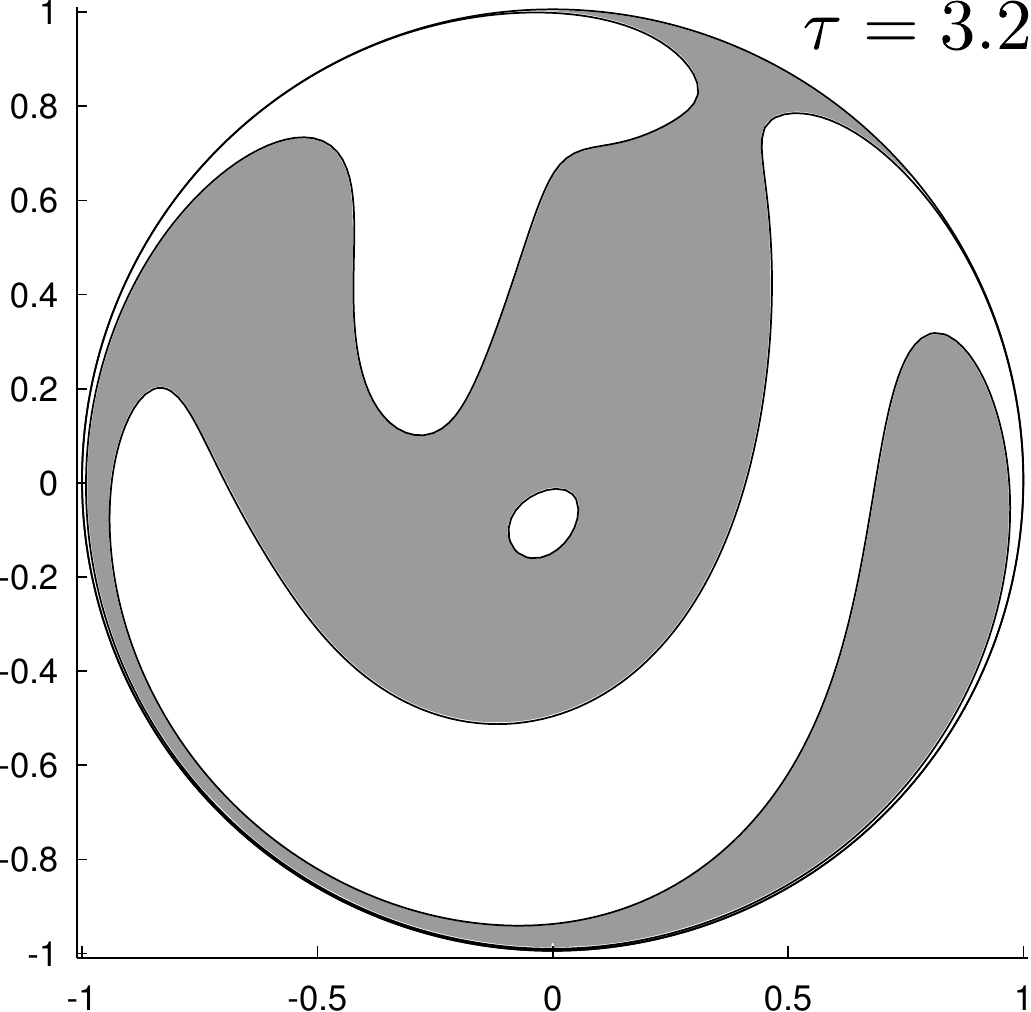}
         \includegraphics[width=.19\linewidth]{./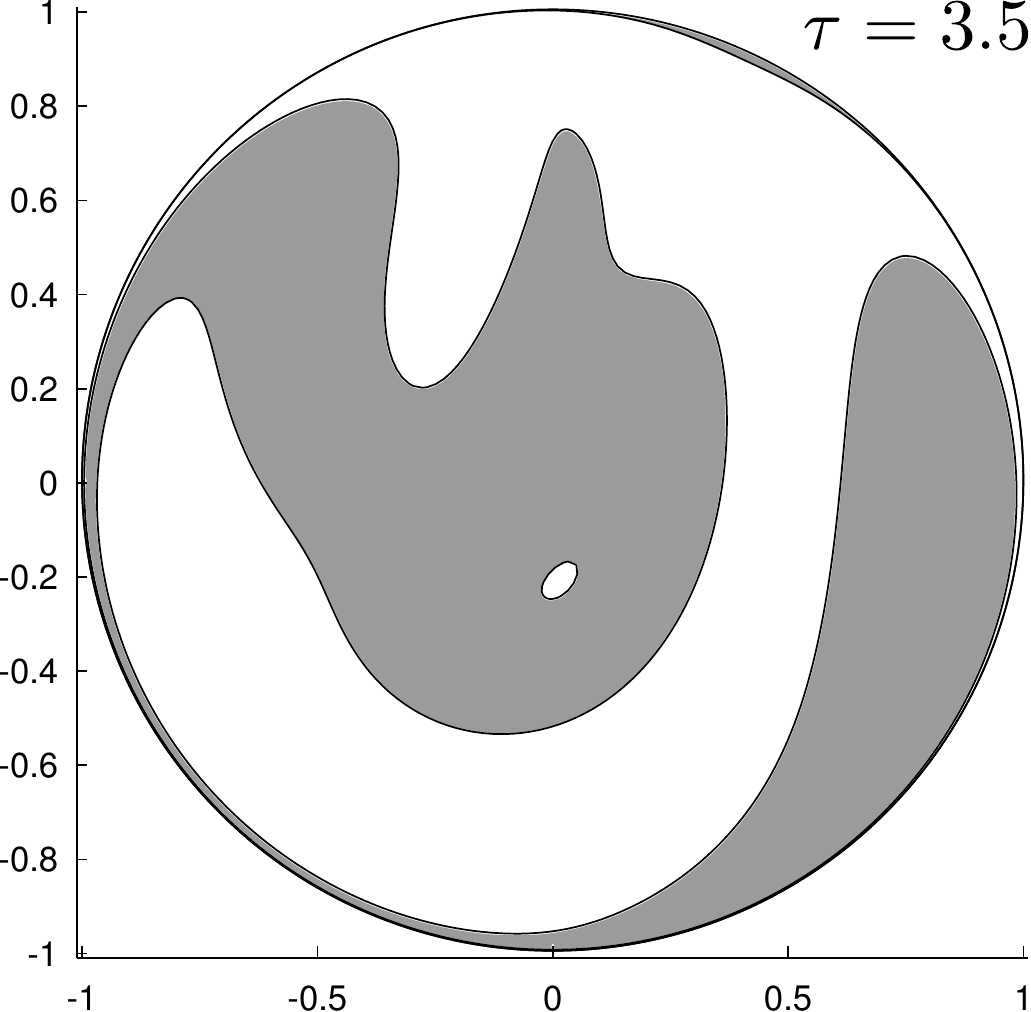}
       }
       \caption{{\footnotesize  Intersection curves $\cI^\textrm{int}_G$ \Eq{rootFindingProblem} with $\xi=\pi/4$ for $\cS_\tau = \{x=0\}$ (top row), and $\cS_\tau = \{y=0\}$ (bottom row).  Regions that map to the positive side of the extraction plane (points for which $\sgn(\cW(T(G(u,v)))) > 0$) are shaded; these correspond to the subsurfaces of $\cU_\tau \subset M$ in \Eq{UIntegral} along whose boundary curves we must integrate to compute $\Phi$.  Representative orientations are shown in the left column.  The accompanying movies \texttt{Microdroplet Intersection Curves:~Extraction Plane \{x=0\}} and \texttt{Microdroplet Intersection Curves:~Extraction Plane \{y=0\}}, corresponding to the top and bottom rows, respectively, show the continuous dependence of $\cI^\textrm{int}_G$ on $\tau$.}
       \label{fig:intersections}}
\end{figure}

To compute the transition map, we use a combination of Cartesian and spherical polar representations of \Eq{droplet} and integrate the vector field using MATLAB's implementation of the Runge-Kutta (5,4) Dormand-Prince pair. The point is that integration in Cartesian coordinates does not respect the invariance of the sphere $\partial M$, while standard spherical coordinates induce singularities at the origin and along the positive $z$-axis. To avoid the latter, we introduce a second spherical representation in which the inclination is measured from the positive $y$-axis.  We monitor both inclination and radius, switching between spherical representations when the inclination drops below some prescribed minimum, and switching to the Cartesian representation near the origin.

We turn now to the application of the action-flux formulas to compute $\Phi$.  It is not hard to show that the rotations in \Eq{singleRotations} are each exact volume-preserving, with generators, \Eq{exactVP},
\[ \begin{split}
 \eta_y &= \frac{1}{2}\cos \theta \sin \theta (z^2 - x^2)\,dy - y \sin^2 \theta (z\,dx + x\,dz), \\ \eta_x &= \frac{1}{2}\cos \psi \sin \psi (y^2 - z^2)\,dx + x \sin^2 \psi (z\,dy + y\,dz), \\
 \eta_z &\equiv 0,
\end{split} \]
respectively.  Consequently, $R(t)$, \Eq{rotation}, is also exact volume-preserving and its generating form, by \Eq{composition}, is
\beq{eta}
 \eta(t) = \eta_y(t) + R_{y*}(t) \eta_x(t).
\eeq
Finally, since $V_0$ is globally Liouville, \Lem{newLambda} implies that the transitory vector field $V(t)$, \Eq{droplet}, is as well, with the  Lagrangian form
\[
 \lambda(t) = R_*(t)\lambda_0 + \cL_{V(t)}\eta(t),
\]
as obtained from \Eq{newLambda} using \Eq{dropletLambda0},  \Eq{rotation}, \Eq{droplet}, and \Eq{eta}.
 
To apply the action-flux formulas, we must identify the appropriate boundaries over which to integrate.  There are two possible types of lobes (interior lobes $\cR^\textrm{int}_t$ and boundary lobes $\cR^\partial_t$, see \Fig{dropletLobes}) and three possible types of bounding surfaces, corresponding to portions of $\cU$, $\cS$ and $\partial M$.
We need only specialize \Eq{mainResult} to integrate the 2-form $\alpha$ over these bounding surfaces.

Since the integral of the two-form $\alpha$ over $\cU_0$ is identically zero, \Eq{mainResult} implies that an integral over $\cU_\tau$ can be reduced to
\beq{UIntegral}
  \int_{\cU^j_\tau} \alpha = \int_0^\tau \left( \int_{\partial \cU^j_s} \lambda \right) ds.
\eeq
A similar simplification applies to $\cS_\tau$, for the two cases that we study.

The integrals over portions of $\partial M$ can be simplified by noting that dynamics on this surface under the stationary vector fields $P=F$ is trivial due to axisymmetry and the invariance of $\partial M$. Indeed, if $\rho$ and $\zeta$ are the azimuthal and inclination angles, then the flow of $F$ reduces to 
\beq{analyticalBdrySoln}
  \vphi^F_t(\rho,\zeta) = \left(\rho, 2 \arctan\left(\tan(\zeta/2) e^{2(t-t_0)}\right)\right)
\eeq
on the droplet boundary.
We use this result to compute the action integrals outside the transition interval. 

Since $\partial M$ is heteroclinic from $p_1$ to $f_2$ under the transitory flow, an integral over a surface $\cB_\tau \subset \partial M$ can be computed using the flow of $F$ in either \Eq{pastAction} or \Eq{futureAction}:
\beq{BIntegral}
  \int_{\cB_\tau} \alpha = \int_{-\infty}^\tau \left( \int_{\vphi^F_{s,\tau}(\partial \cB_\tau)} \lambda \right) ds = 
                             -\int_\tau^\infty \left( \int_{\vphi^F_{s,\tau}(\partial \cB_\tau)} \lambda \right) ds.
\eeq
Here we simply chose to evaluate the integral that converges faster.   It may happen that one the equilibria $f^i$ of $F$ lies on the boundary $\partial \cB_\tau$; in this case, only one of the two integrals \Eq{BIntegral} converges. In the rare case that both equilibria lie on $\partial \cB_\tau$, we can modify the flow $F$ by applying a rotation so as to effectively move the equilibria. Convergence of these integrals can also be accelerated by estimating their exponential tails as discussed in \App{SpeedUp}.

As always, care must be taken to ensure that the intersection curves are oriented to be
consistent with a right-handed outward normal to the lobes. For example, the correct orientation for integration over $\partial \cU$ is indicated in \Fig{intersections}.

\subsection{Results} \label{sec:microdropletResults}
Figure \ref{fig:dropletFlux} summarizes the fluxes computed using the shape model \Eq{angles} for two values of the amplitude $\xi$, the two choices of extraction plane in \Eq{extractionPlaneChoices}, and various transition times.  The curves show the fraction of fluid $A$ found in the positive extracted hemisphere, $\cF_\tau$.
Notice that, in each case, $\Phi$ reaches a maximum at an intermediate transition time.  These maximal fluxes are given in Table \ref{tab:maxFlux}.  

\InsertFigTwo{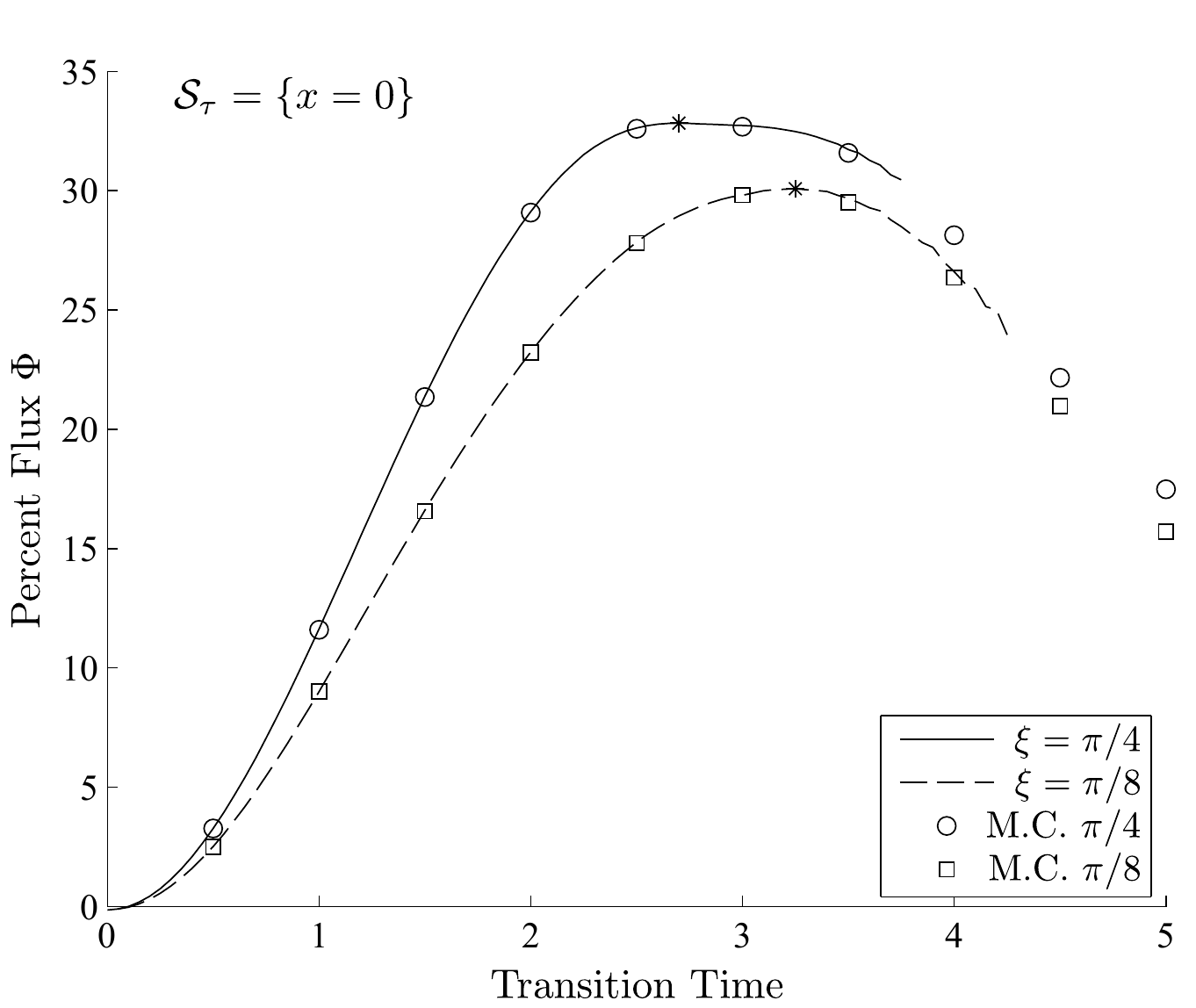}{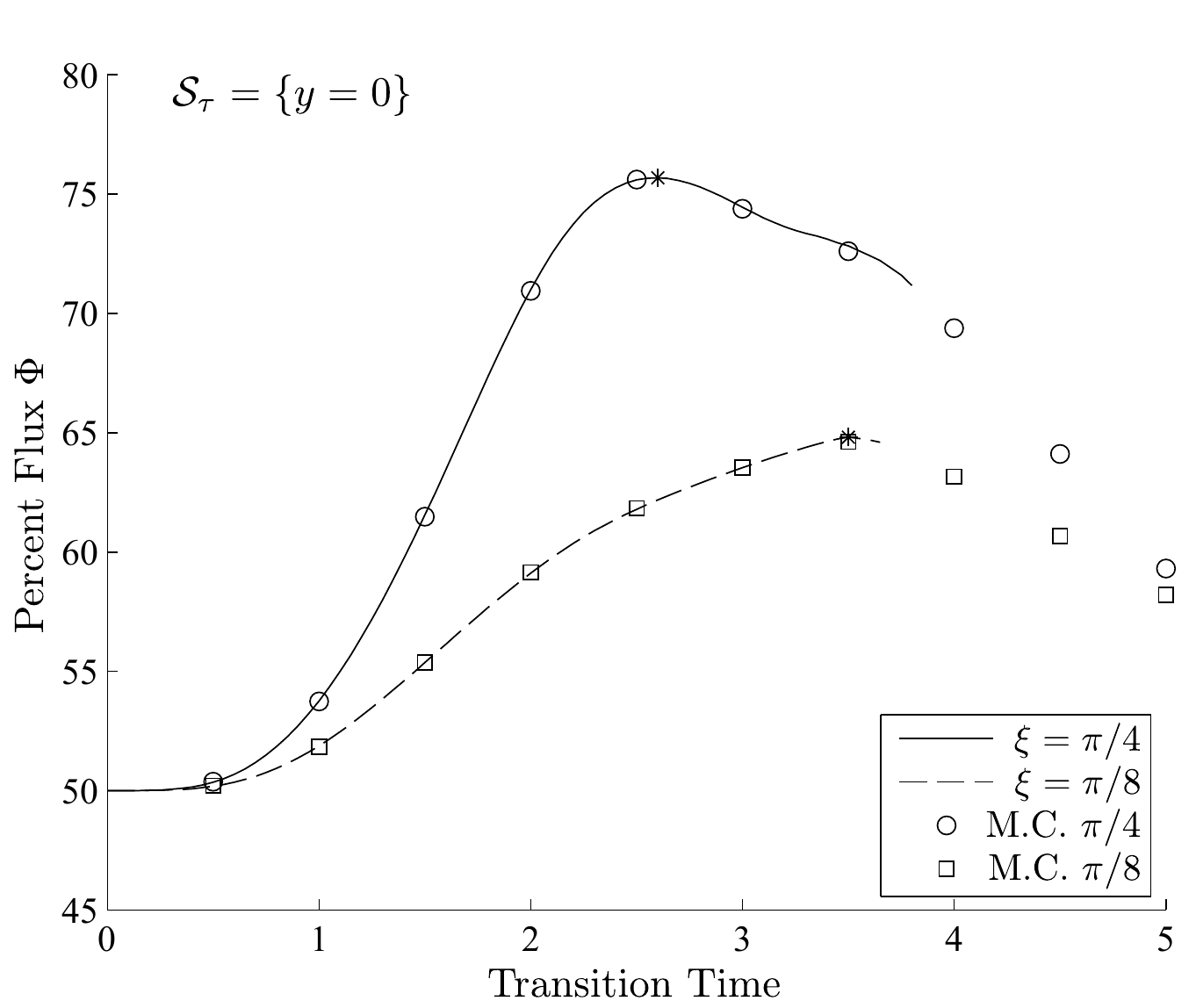}
{Percent composition of fluid $A$ in the positive extracted hemisphere.  Monte Carlo simulation results with $N=10^{6}$ are shown as the circle and square markers (see legend). The maximum computed flux is denoted with an $*$.}{dropletFlux}{.45\linewidth}

\begin{table}[hc]
  \footnotesize
  \renewcommand{\arraystretch}{1.3}
  \centering
  \caption{\footnotesize \label{tab:maxFlux} Maximum percent flux $\Phi$ and corresponding $\tau$.}
  \begin{tabular}{|c|c|c|c|c|}
    \hline
    \multirow{2}{*}{$S_\tau$} & \multicolumn{2}{|c|}{$\xi=\pi/8$} & \multicolumn{2}{|c|}{$\xi=\pi/4$} \\
    \cline{2-5}          & $\tau$              & $\Phi$ (\%)   & $\tau$    & $\Phi$ (\%)   \\ \hline 
    $\{x=0\}$ & 3.25      & 30.22   & 2.7     & 32.97 \\ \hline 
    $\{y=0\}$ & 3.5       & 64.81   & 2.6     & 75.68    \\ \hline 
  \end{tabular}
\end{table}

As $\tau$ increases, the intersection curves $\cI_G$ become increasingly clustered near the boundary of the parameter space, as can be seen in \Fig{intersections}.  Eventually, this clustering becomes so pronounced that the curves cannot be distinguished numerically. This seems to imply that these regions near the boundary contribute little to the flux; however, the transition map expands the small initial differences between these curves by several orders of magnitude. The result is a significant contribution to the overall flux from the unresolved portions of the intersections.  Thus, the computation of $\Phi$ for larger $\tau$ is not numerically feasible using the action-flux formulas. However, the use of both forward and backward integration over the transition interval can ameliorate resolution problems, allowing somewhat larger $\tau$ values to be reached, see \App{SpeedUp}.

To validate our results, we again employ the Monte Carlo technique outlined in \Sec{abcResults}.  Its implementation is simple since we have analytical formulas for the boundaries of the injection and extraction hemispheres $\cP_0$ and $\cF_\tau$.  Since $\cP_0$ is simply half a ball of radius $1$, the overall flux is estimated by
\[
  \Phi \approx \frac23 \pi \frac{N_{in}}{N}.
\]
The results for $N = 10^6$ are indicated in \Fig{dropletFlux} by the open circles and squares. We again find that the difference between the Monte Carlo and action-flux computations of $\Phi$ is typically less than the estimated Monte Carlo error \Eq{MCError}.  Only for large $\tau$ does the difference between the two become significant. Note also that the Monte Carlo computations are inefficient when the flux is small; most of the computational effort is wasted in this case since most sample trajectories do not contribute to $N_{in}$. As a reflection of this, the error also grows, as indicated in \Eq{MCError}. By contrast, the action-flux formulas remain accurate when the flux is small. For example, Monte Carlo simulations do not give accurate results for $\tau < 0.5$ when $S_\tau = \{x=0\}$ in \Fig{dropletFlux}.

An advantage of the Monte Carlo method is that it appears to give reasonably accurate results for larger transition times than the action-flux computations. 
However, this method has several limitations. If the past and future invariant regions had more complex boundaries than the hemispheres in our model, then initialization of the trajectories in $\cP_0$, and determination of whether they are advected to $\cF_\tau$ would require  a high-resolution representation of the lobe boundary surfaces. This is computationally expensive due to the exponential stretching that occurs over the transition interval.  The same problem occurs, even when the regions have analytically simple definitions, if there is more than one lobe and the computation of individual lobe volumes is of interest.
On the other hand, calculation of individual lobe volumes using the action-flux formulas involves no additional effort: each is computed by the action-flux formulas for the particular intersection curves on its boundary.



\section{Conclusions} \label{sec:conclusions}
We have given the first quantitative analysis of transport between Lagrangian coherent structures in aperiodic, three-dimensional flows, and have validated our results using Monte Carlo methods. The coherent structures for the transitory case are simply past- and future-invariant regions, and transport corresponds to the flux from the former to the latter. We rely on the action-flux formulas of \Sec{lobeVolumes} to provide a general framework for computing lobe volumes in $n$-dimensional, globally Liouville flows.  An advantage of these formulas is that lobe volume computations require relatively little Lagrangian information: only the orbits of codimension-two intersections of lobe boundary components need to be computed.  Indeed, we found that high-resolution representations of these intersections can be obtained from a root-finding and continuation method, which is seeded using only a very coarse representation of the lobe boundary itself.  An unusual aspect of the droplet model of \Sec{microdroplet} is that that lobes are not bounded by hyperbolic manifolds of past- or future-hyperbolic orbits---the bounding surfaces are simply invariant under the stationary vector fields $P$ and $F$.

Mixing in droplet models similar to that of \Sec{microdroplet} has been treated in \cite{Stone05} and, in the steady case, in \cite{Vainchtein07}.  Even though these studies computed mixing within a droplet, they did not address finite-time transport between coherent structures.  Our model of smooth transitions in a serpentine mixer is perhaps more realistic than the instantaneous transitions used by \cite{Stone05}, and we plan to compare our present results with direct simulations of the Navier-Stokes equations.

It was observed in \Sec{microdroplet} that there is an optimal transition time that maximizes intradroplet transport.  We would like to further investigate the effects of microchannel shape and choice of the injection and extraction planes on transport.  Ultimately, we hope to find optimal channel shapes to aid in the design of efficient microfluidic mixers.

To make this study more relevant to fluid flows, it will also be valuable to go beyond transport and study diffusive mixing.  The inclusion of diffusive and reactive processes within the transitory framework could help in the development of a quantitative comparison between transport and various mixing norms.  Such an improved understanding will benefit applications ranging from the design of more efficient industrial and microfluidic mixing devices to the effective large-scale recovery of contaminants in the ocean and atmosphere.


\appendix

\section{Some Notation}\label{app:appendix}
Here we set out our notation, which follows, e.g.,~\cite{Abraham78}.
 We denote the set of $k$-forms on a manifold $M$ by $\Lambda^k(M)$. If $\alpha \in \Lambda^k(M)$ and $V_1, V_2, \ldots V_k$ are vector fields, then the pushfoward, $R_*$, of $\alpha$ by $R$ is
\beq{pushForwardF}
    (R_*\alpha)_\bx(V_1,V_2,...,V_k) =\alpha_{R^{-1}(\bx)}((DR(\bx))^{-1}V_1(\bx),\ldots, (DR(\bx))^{-1}V_k(\bx))  .
\eeq
The pushforward can be applied to a vector field $V$ as well:
\beq{pushFowardV}
    (R_{*}V)(\bx) = (DR(R^{-1}(\bx))V(R^{-1}(\bx))  .
\eeq
The pullback operator is
\[
    R^* = (R^{-1})_*  .
\]
The interior product, or contraction, of $\alpha$ with $V$ is the $(k-1)$-form
\[
    \imath_V \alpha = \alpha(V,\cdot,\ldots,\cdot)  .
\]
Suppose that $\vphi_{t_1,t_0}:M \times \bR^2 \to M$ is the ($C^1$) flow of a vector field $V(\bx,t)$, so that
$\vphi_{t_0,t_0}(\bx) = \bx$, and $\frac{d}{dt} \vphi_{t,t_0}(\bx) = V(\vphi_{t,t_0}(\bx), t)$. Then the Lie
derivative with respect to $V$ is the differential operator
\beq{LieDeriv}
    \cL_V (\cdot) \equiv  \left. \frac{d}{dt}\right|_{t=t_0}  \vphi_{t,t_0}^* (\cdot)  .
\eeq
The key identity for the derivative is Cartan's homotopy formula:
\beq{LieIdentity}
    \cL_V \alpha  \equiv \imath_V ( d \alpha ) + d(\imath_V \alpha)  .
\eeq
Note that $\cL$ behaves ``naturally" with respect to the pushfoward:
\beq{naturally}
	R_*\cL_V \alpha = \cL_{R_*V} R_*\alpha .
\eeq

\section{Remarks on Computing Lobe Volumes}\label{app:SpeedUp}

Here we comment on several technical aspects of our numerical implementation of the action-flux formulas.  We describe an efficient method for computing the intersection curves $\cI_\tau$, discuss examples in which the action-flux formulas may be applied even if the surface areas of lobe boundary components do not vanish as $t \to \pm \infty$, and remark on several ways to accelerate the convergence of the integrals in \Eq{pastAction} and \Eq{futureAction}.

Computing the curves $\cI_\tau =\partial\cP_\tau \cap \partial \cF_\tau$, as discussed in 
the examples of \Sec{abc} and \Sec{microdroplet}, essentially amounts to a root-finding and 
continuation problem.  The $\cI_\tau$ are defined to be zero-level-sets of some function on 
a 2D parameterization of $\partial \cP_0$.  This parameterization is sampled with a coarse 
grid, neighboring grid-points that bracket the zero-crossing are identified, and these 
brackets are refined along grid lines to produce ``seed'' points that lie on $\cI_\tau$.  A 
circle of radius $\delta$ (a pre-specified maximum Euclidean distance between neighboring 
curve points) is centered at each seed, and a 1D root finder on the angle around the circle 
is used to find a new point on $\cI_\tau$. The existence of this new point is guaranteed by 
the topology of $\cI_\tau$, provided the continued curve does not intersect the boundary of 
$M$.  A new $\delta$-circle is then centered at this new point, and the continuation is 
repeated until either the curve closes on itself, the angle between consecutive estimates 
of the curve tangent grows too large (this occurs when the curvature of $\cI_\tau$ is 
large), or the curve intersects the domain boundary.  In regions where the curvature of $
\cI_\tau$ is large, refinement is easily performed by reducing the radius $\delta$.  
Finally, care must be taken to ensure that a given seed  does not lie on a previously 
tracked curve.

Even if a well-defined lobe exists, and the curves $\cI_\tau$ are computed as described above, it may be the case that the $\alpha$-surface area of some component of the lobe boundary does not vanish in either direction of time under $\vphi^P$ and $\vphi^F$.  For example in \Sec{abc}, $\cU_0$ wraps entirely around $\Wu_0(p)$, and so its surface area never vanishes under $\vphi^P$; consequently, the action-flux formulas \Eq{pastAction}--\Eq{futureAction} can not be used to compute the second integral in \Eq{transformation}.  We can resolve this problem by dividing $\cU_0$ into subsurfaces whose $\alpha$-surface areas do eventually vanish under $\vphi^P$. For the case shown in \Fig{splitting}, there are four such subsurfaces, and it is easy to see that $\cU_0^1$ and $\cU_0^3$ collapse to $p$ as $t \to -\infty$, and  $\cU_0^2$ and $\cU_0^4$ collapse as $t \to +\infty$.  Using \Eq{pastAction} and \Eq{futureAction}, as appropriate, then gives
\beq{unstableIntegral}
  \int_{\cU_0} \alpha = \int_{-\infty}^{0}\left( \int_{\partial \cU_s^1 + \partial \cU_s^3} \lambda \right) ds 
                         - \int_{0}^{\infty} \left( \int_{\vphi^P_{s,0}(\partial \cU_0^2 + \partial \cU_0^4)} \lambda^P \right) ds .
\eeq
Here we emphasize that for the second integral the past flow $\vphi^P$ is to be used to evolve the boundaries, and the integrand is $\lambda^P$, the Lagrangian form for the past vector field $P$.

 \InsertFigTwo{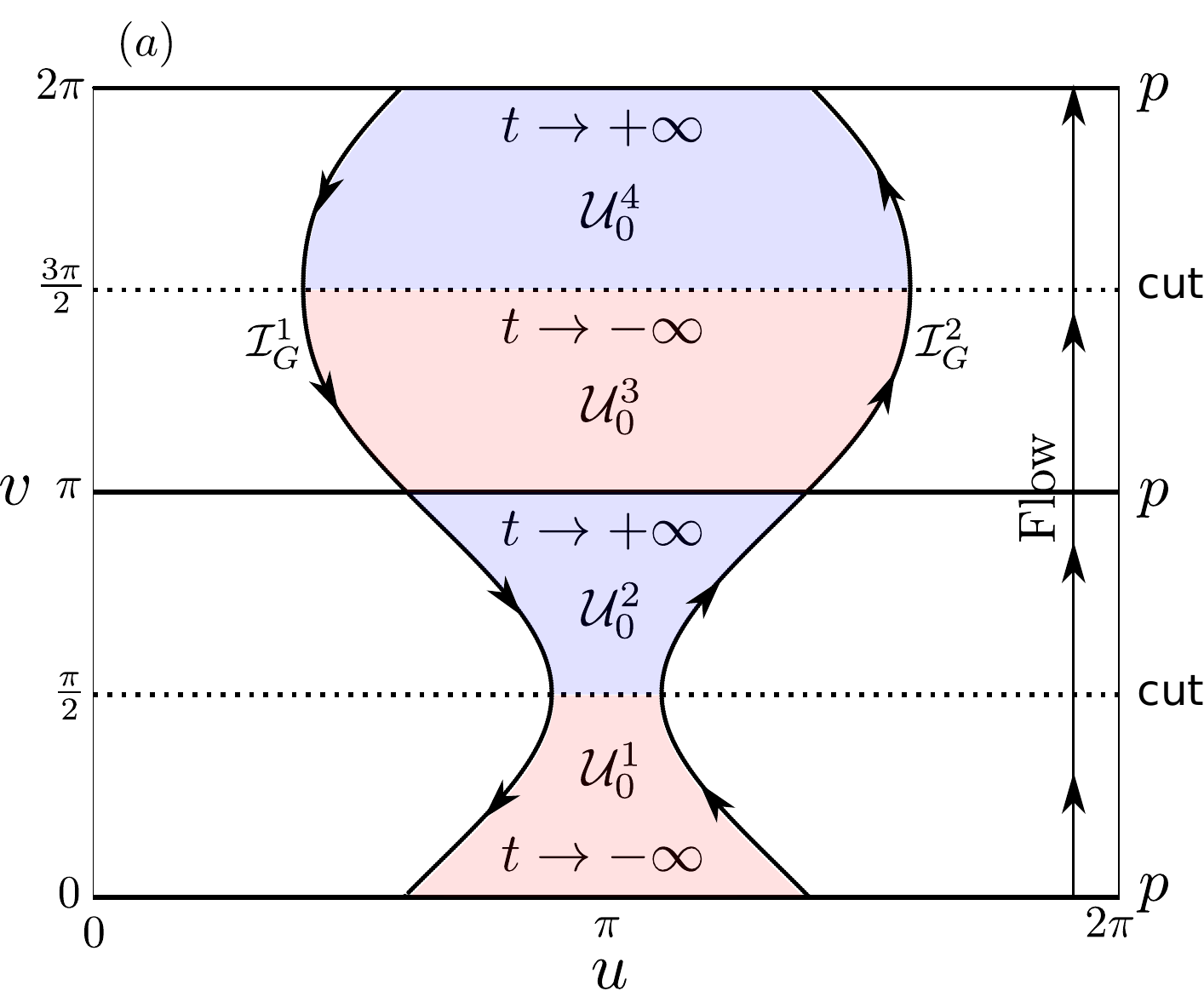}{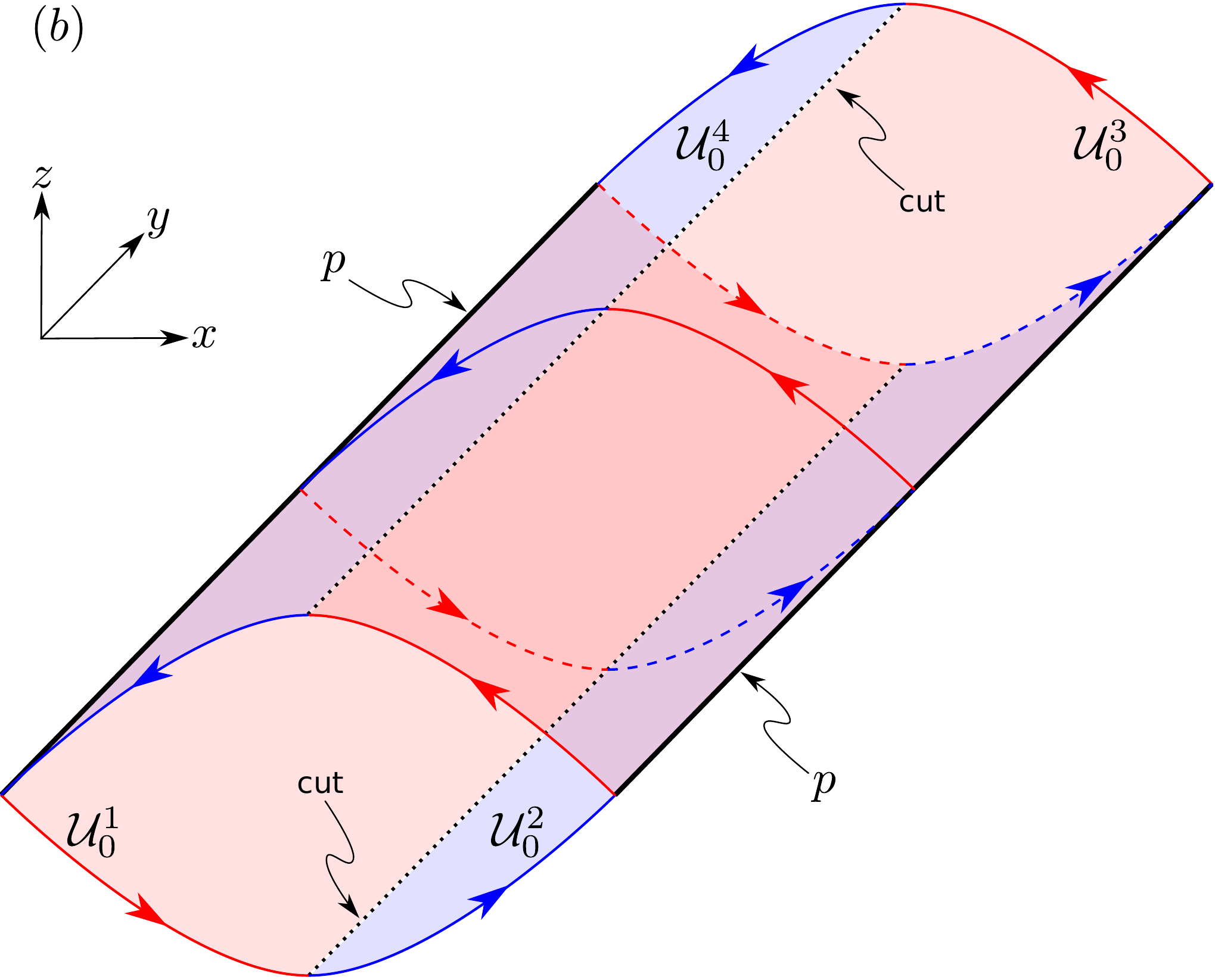}{(a) Subsurfaces of $\cU_0$, for the case shown in \Fig{Intersection2D_1}(a), that collapse under the past flow $\vphi^P$ as $t\to \infty$ (blue) or $t \to -\infty$ (red).  Arrows on $\cI_G$ denote orientation with respect to a right-handed outward normal.  (b) Subsurfaces $\cU^j_0 \subset M$.  Arrows denote direction of flow under $\vphi^P$.  Blue (red) regions collapse to $p$ in forward (backward) time.}{splitting}{.45\linewidth}


It is also possible to modify the application of \Eq{pastAction} and \Eq{futureAction} by choosing to evolve trajectories under a flow that does not coincide with the transitory flow $\vphi$.  Indeed, as we remarked in \Sec{lobeVolumes}, once a slice of codimension-one manifold is specified, \bTh{mainResult} is valid for its evolution under \emph{any} globally Liouville flow.  For example, the areas of $\cS_t^j$ in \Eq{stableIntegral} vanish under $\vphi^F$ both as $t \to \infty$ and as  $t \to -\infty$.  Thus, since $F$ itself is a globally Liouville vector field, \Eq{pastAction} may alternatively be applied with the replacements $\vphi \to \vphi^F$ and $\lambda \to \lambda^F$ to achieve the same answer as in \Eq{stableIntegral}; that is,
\beq{stableIntegralAlt}
  \int_{\cS_\tau^j} \alpha = \int_{-\infty}^\tau \left( \int_{\vphi^F_{s,\tau}(\cI_\tau^j)} \lambda^F \right) ds.
\eeq
We found it most efficient to choose either \Eq{stableIntegral} or \Eq{stableIntegralAlt} depending upon whether the intersection curve is closer to $f^1$ or $f^2$ at $t = \tau$.  In practice the intersection curves were closer to $f^1$, and so we used the past integral, \Eq{stableIntegralAlt}, for $\cS^1_\tau$  and the future, \Eq{stableIntegral}, for $\cS^2_\tau$. 

In addition to an appropriate choice of flow, the convergence of the time integrals in \Eq{stableIntegral} and \Eq{stableIntegralAlt} can be further accelerated by extrapolation.  Since the intersection curves lie on the stable manifolds of the periodic orbits $f^k$, they converge exponentially to these orbits in forward time; the contour integrals necessarily converge exponentially to zero in this same limit.  Thus, the convergence of the time integral can be accelerated by estimating the exponential tail of its integrand (a time-dependent contour integral) using the local expansion and contraction rates, $\sigma$ (for the ABC case $\sigma = \sqrt{AC}$), about the hyperbolic periodic orbits $f^k$ \Eq{fk}.
For example, the integral in \Eq{stableIntegral} can be truncated at time $t$ to give the estimate
\beq{expTail}
  \int_{\cS_\tau^j} \alpha \approx 
  - \int_\tau^t \left( \int_{\cI^j_s} \lambda \right) ds - \frac{1}{\sigma} \int_{\cI^j_t} \lambda.
\eeq
In practice we increase $t$ until this estimate converges to some desired tolerance.  Similar extrapolation is used to accelerate the integrals \Eq{BIntegral} in \Sec{microdroplet}.

Additional simplifications of the flux computation arise if the past or future vector fields are simple enough that explicit solutions can be found. For example, for the transitory ABC vector field, both $P$ and $F$ are integrable, and the analytical solutions on the separatrices are known \cite{Zhao93}.
Using these analytical formulas in the action-flux computations greatly reduces computation time. We use a similar simplification in \Sec{DropletComputations} for the flow on the droplet boundary.
When this simplification applies, the only numerical advection that that must be performed is over the transition interval $[0,\tau]$.

For the computations of \Sec{DropletComputations}, the separation of nearby trajectories affects the evaluation of \Eq{UIntegral} and \Eq{BIntegral} even when the intersection curves $\cI_G$ are numerically distinguishable. Indeed the distance between neighboring points on the numerically computed intersection curves grows nonuniformly. Consequently, for large $\tau$, the resolution of $\cI_\tau$ may be quite poor even if $\cI_0$ is well-resolved.  One way to ameliorate this effect is to use a second numerical representation of the intersection curves at time $t=\tau$ by solving a similar problem to \Eq{rootFindingProblem} for a parameterized representation of $\cS_\tau$. These curves can be integrated backward. Of course, in this case the resolution will degrade as $t$ decreases. If we use the two representations over the first and second halves of the transition interval, then the accuracy of the integrals \Eq{UIntegral} and \Eq{BIntegral} is improved.


Finally, if the transitory flow has symmetries, then these can be exploited to simplify the computations.  For example, the vector field \Eq{droplet} for the microdroplet example, with rotations \Eq{angles}, is reversible.  That is, there is a reversor $\Theta : M \to M$ such that
\beq{dropletReversor}
  \Theta_*V(\bx, t) = -V(\bx, \tau-t).
\eeq
This implies that $\Theta T = T^{-1} \Theta$, and
moreover, when $\cS_\tau = \cU_0$, that  $\cI_\tau = \Theta\cI_0$ \cite{MosovskyThesis}.  
Thus if we choose the numerical representations at $0$ and $\tau$ to respect this symmetry, the forward and backward iterations over the half-transition intervals are identical. This is much more efficient than solving the root-finding problem \Eq{rootFindingProblem} at both $t=0$ and $t=\tau$.

\bibliographystyle{siam}
\bibliography{3Dtransitory}

\def\cprime{$'$}
\begin{thebibliography}{10}

\bibitem{Abraham78}
{\sc R.~Abraham and J.E. Marsden}, {\em Foundations of {M}echanics}, Benjamin
  Cummings, 1978.

\bibitem{Aref84}
{\sc H~Aref}, {\em Stirring by chaotic advection}, J. Fluid Mech., 143 (1984),
  pp.~1--21.

\bibitem{Arnold65}
{\sc V.~I. Arnold}, {\em Sur la topologie des \'ecoulements stationnaires des
  fluides parfaits}, C. R. Hebd. Seances Acad. Sci. Paris, 261 (1965),
  pp.~17--20.

\bibitem{Beebe02}
{\sc D.~J. Beebe, G.~A. Mensing, and G.~M. Walker}, {\em Physics and
  applications of microfluidics in biology}, Annu. Rev. Biomed. Eng., 4 (2002),
  pp.~261--286.

\bibitem{Bringer04}
{\sc M.~R. Bringer, C.~J. Gerdts, H.~Song, J.~D. Tice, and R.~F. Ismagilov},
  {\em Microfluidic systems for chemical kinetics that rely on chaotic mixing
  in droplets}, Phil. Trans. R. Soc. Lond. A, 362 (2004), pp.~1087--1104.

\bibitem{Cardwell08}
{\sc B.~Cardwell and K.~Mohseni}, {\em Vortex shedding over a two-dimensional
  airfoil: Where the particles come from}, AIAA Journal, 46 (2008),
  pp.~545--547.

\bibitem{Chin11}
{\sc C.~D. Chin, T.~Laksanasopin, Y.~K. Cheung, D.~Steinmiller, V.~Linder,
  Parsa H., J.~Wang, H.~Moore, R.~Rouse, G.~Umviligihozo, E.~Karita,
  L.~Mwambarangwe, S.~L. Braunstein, J.~van~de Wijgert, R.~Sahabo, J.~E.
  Justman, W.~El-Sadr, and S.~K. Sia}, {\em Microfluidics-based diagnostics of
  infectious diseases in the developing world}, Nature Medicine, 17 (2011),
  pp.~1015--1019.

\bibitem{Dombre86}
{\sc T.~Dombre, U.~Frisch, J.M. Greene, M.~H\'enon, A.~Mehr, and A.M. Soward},
  {\em Chaotic streamlines in the abc flows}, J. Fluid Mech., 167 (1986),
  pp.~353--391.

\bibitem{Froyland09}
{\sc G.~Froyland and K.~Padberg}, {\em Almost-invariant sets and invariant
  manifolds---connecting probabilistic and geometric descriptions of coherent
  structures in flows}, Phys. D, 238 (2009), pp.~1507--1523.

\bibitem{Froyland10}
{\sc G.~Froyland, N.~Santitissadeekorn, and A.~Monahan}, {\em Transport in
  time-dependent dynamical systems: Finite-time coherent sets}, Chaos, 20
  (2010), p.~043116.

\bibitem{Galloway87}
{\sc D.~Galloway and U.~Frisch}, {\em A note on the stability of a family of
  space-periodic {B}eltrami flows}, J. Fluid Mech., 180 (1987), pp.~557--564.

\bibitem{Hadamard11}
{\sc J.~Hadamard}, {\em Mouvement permanent lent d'une sph\`ere liquide et
  visqueuse dans un liquide visqueux}, C. R. Hebd. Seances Acad. Sci. Paris,
  152 (1911), pp.~1735--1738.

\bibitem{Haller01}
{\sc G.~Haller}, {\em Distinguished material surfaces and coherent structures
  in three-dimensional fluid flows}, Physica D., 149 (2001), pp.~248--277.

\bibitem{Haller98b}
{\sc G.~Haller and A.~C. Poje}, {\em Finite time transport in aperiodic flows},
  Physica D, 119 (1998), pp.~352--380.

\bibitem{Haller00}
{\sc G.~Haller and G.~Yuan}, {\em Lagrangian coherent structures and mixing in
  two-dimensional turbulence}, Phys. D, 147 (2000), pp.~352--370.

\bibitem{Henon66}
{\sc M.~H\'enon}, {\em Sur la topologie des lignes de courant dans un cas
  particulier}, C. R. Hebd. Seances Acad. Sci. Paris A, 262 (1966),
  pp.~312--314.

\bibitem{Huang98}
{\sc D.-B. Huang, X.-H. Zhao, and H.-H. Dai}, {\em Invariant tori and chaotic
  streamlines in the {ABC} flow}, Phys. Lett. A, 237 (1998), pp.~136--140.

\bibitem{Ide02}
{\sc K.~Ide, D.~Small, and S.~Wiggins}, {\em Distinguished hyperbolic
  trajectories in time-dependent fluid flows: analytical and computational
  approach for velocity fields defined as data sets}, Nonlin. Proc. Geophys., 9
  (2002), pp.~237--263.

\bibitem{Madrid09}
{\sc J.~Jim\'enez~Madrid and A.~Mancho}, {\em Distinguished trajectories in
  time dependent vector fields}, Chaos, 19 (2009), p.~013111.

\bibitem{Kang08}
{\sc T.~G. Kang, M.~K. Singh, H.~Kwon, T, and P.~D. Anderson}, {\em Chaotic
  mixing using periodic and aperiodic sequences of mixing protocols in a
  micromixer}, Microfluid Nanofluid, 4 (2008), pp.~589--599.

\bibitem{Lekien07}
{\sc F.~Lekien, S.~Shadden, and J.~Marsden}, {\em {L}agrangian coherent
  structures in \textit{n}-dimensional systems}, J. Math. Phys., 48 (2007),
  p.~065404.

\bibitem{Liu94}
{\sc M.~Liu, F.~J. Muzzio, and R.~L. Peskin}, {\em Quantification of mixing in
  aperiodic chaotic flows}, Chaos Solitons Fractals, 4 (1994), pp.~869--893.

\bibitem{Lomeli08c}
{\sc H.E. Lomel\'{\i} and J.D. Meiss}, {\em Resonance zones and lobe volumes
  for volume-preserving maps}, tech. report, University of Colorado, 2008.

\bibitem{Lomeli09}
\leavevmode\vrule height 2pt depth -1.6pt width 23pt, {\em Generating forms for
  exact volulme-preserving maps}, Discrete Contin. Dyn. Syst. Ser. S, 2 (2009),
  pp.~361--377.

\bibitem{Lyuksyutov04}
{\sc I.~F. Lyuksyutov, D.~G. Naugle, and K.~D.~D. Rathnayaka}, {\em On-chip
  manipulation of levitated femtodroplets}, App. Phys. Lett, 85 (2004),
  pp.~1817--1819.

\bibitem{MacKay94}
{\sc R.S. MacKay}, {\em Transport in 3{D} volume-preserving flows}, J. Nonlin.
  Sci, 4 (1994), pp.~329--354.

\bibitem{MMP84}
{\sc R.S. MacKay, J.D. Meiss, and I.C. Percival}, {\em Transport in
  {H}amiltonian systems}, Physica D, 13 (1984), pp.~55--81.

\bibitem{Marmo81}
{\sc G.~Marmo, E.J. Saletan, A.~Simoni, and F.~Zaccaria}, {\em Liouville
  dynamics and {P}oisson brackets}, J. Math. Phys., 22 (1981), pp.~835--842.

\bibitem{Meijer09}
{\sc H.~E.~H. Meijer, M.~K. Singh, T.~G. Kang, J.~M.~J. den Toonder, and P.~D.
  Anderson}, {\em Passive and active mixing in microfluidic devices}, Macromol.
  Symp., 279 (2009), pp.~201--209.

\bibitem{Meiss92}
{\sc J.D. Meiss}, {\em Symplectic maps, variational principles, and transport},
  Rev. Mod. Phys., 64 (1992), pp.~795--848.

\bibitem{Meiss07}
{\sc J.~Meiss}, {\em Differential dynamical systems}, Interdisciplinary Applied
  Mathematics, SIAM, Philadelphia, 2007.

\bibitem{Mendoza10}
{\sc C.~Mendoza, A.~Mancho, and M.~Rio}, {\em The turnstile mechanism across
  the {K}uroshio current: analysis of dynamics in altimeter velocity fields},
  Nonlin. Proc. Geophys., 17 (2010), pp.~103--111.

\bibitem{MosovskyThesis}
{\sc B.~Mosovsky}, {\em Finite-Time Transport in Transitory Dynamical Systems},
  PhD thesis, University of Colorado at Boulder, May 2012.

\bibitem{Mosovsky11}
{\sc B.~Mosovsky and J.D. Meiss}, {\em Transport in transitory dynamical
  systems}, SIAM J. Appl. Dyn. Sys., 10 (2011), pp.~35--65.

\bibitem{Nagrath07}
{\sc S.~Nagrath, L.~Sequist, S.~Maheswaran, D.~Bell, D.~Irimia, L.~Ulkus,
  M.~Smith, E.~Kwak, S.~Digumarthy, A.~Muzikansky, Ryan P., U.~Balis,
  R.~Tompkins, D.~Haber, and M.~Toner}, {\em Isolation of rare circulating
  tumour cells in cancer patients by microchip technology}, Nature, 450 (2007),
  pp.~1235--1241.

\bibitem{Ottino04b}
{\sc J.~M. Ottino and S.~Wiggins}, {\em Designing optimal micromixers},
  Science, 305 (2004), pp.~485--486.

\bibitem{Ottino04a}
\leavevmode\vrule height 2pt depth -1.6pt width 23pt, {\em Introduction: mixing
  in microfluidics}, Phil. Trans. R. Soc. Lond. A, 362 (2004), pp.~923--935.

\bibitem{Poje99}
{\sc A.~C. Poje, G.~Haller, and I.~Mezi\'c}, {\em The geometry and statistics
  of mixing in aperiodic flows}, Phys. Fluids, 11 (1999), pp.~2963--2968.

\bibitem{Press92}
{\sc W.~H. Press, S.~A. Teukolsky, W.~T. Vetterling, and B.~P. Flannery}, {\em
  Numerical recipes in C}, Cambridge University Press, Cambridge, 2~ed., 1992.

\bibitem{RomKedar88}
{\sc V.~Rom-Kedar and S.~Wiggins}, {\em Transport in two-dimensional maps},
  Arc. Rational Mech. Anal., 109 (1988), pp.~239--298.

\bibitem{Sadlo09}
{\sc F.~Sadlo and R.~Peikert}, {\em Visualizing {L}agrangian coherent
  structures and comparison to vector field topology}, in Topology-Based
  Methods in Visualization II, H.-C. Hege, K.~Polthier, and G.~Scheuermann,
  eds., Springer, 2009, pp.~15--30.

\bibitem{Sarrazin06}
{\sc F.~Sarrazin, K.~Loubi\'ere, L.~Prat, C.~Gourdon, T.~Bonometti, and
  J.~Magnaudet}, {\em Experimental and numerical study of droplets
  hydrodynamics in microchannels}, AIChE Journal, 52 (2006), pp.~4061--4070.

\bibitem{Schattner09}
{\sc E.~Schattner}, {\em A chip against cancer: Evaluating tumors and their
  treatment via a blood sample}, Sci. Am., 300 (2009), pp.~21--22.

\bibitem{Shadden05}
{\sc S.C. Shadden, F.~Lekien, and J.~Marsden}, {\em Definition and properties
  of {L}agrangian coherent structures from finite-time {L}yapunov exponents in
  two-dimensional aperiodic flows}, Phys. D, 212 (2005), pp.~271--304.

\bibitem{Shi10}
{\sc L.~T. Shi, C.~G. Jiang, G.~J. Ma, and C.~W. Wu}, {\em Electric field
  assisted manipulation of microdroplets on a superhydrophobic surface},
  Biomicrofluidics, 4 (2010), p.~041101.

\bibitem{Stone05}
{\sc Z.~B. Stone and H.~A. Stone}, {\em Imaging and quantifying mixing in a
  model droplet micromixer}, Phys. of Fluids, 17 (2005), p.~063103.

\bibitem{Stroock02}
{\sc A.~D. Stroock, S.~K.~W. Dertinger, A.~Ajdari, I.~Mezi\'c, H.~A. Stone, and
  G.~M. Whitesides}, {\em Chaotic mixer for microchannels}, Science, 295
  (2002), pp.~647--651.

\bibitem{Vainchtein07}
{\sc D.~L. Vainchtein, J.~Widloski, and R.~O. Grigoriev}, {\em Mixing
  properties of steady flow in thermocapillary driven droplets}, Phys. Fluids,
  19 (2007), p.~067102.

\bibitem{Wiggins04}
{\sc S.~Wiggins and J.~M. Ottino}, {\em Foundations of chaotic mixing}, Phil.
  Trans. R. Soc. Lond. A, 362 (2004), pp.~937--970.

\bibitem{Zhao93}
{\sc X.-H. Zhao, K.-H. Kwek, J.-B. Li, and K.-L. Huang}, {\em Chaotic and
  resonant streamlines in the abc flow}, SIAM J. Appl. Math, 53 (1993),
  pp.~71--77.

\end{thebibliography}

\end{document}